\newcommand{\LL}{\mathcal{L}}
\DeclareMathOperator{\SI}{SI}
\newcommand{\subalign}[1]{%
  \vcenter{%
    \Let@ \restore@math@cr \default@tag
    \baselineskip\fontdimen10 \scriptfont\tw@
    \advance\baselineskip\fontdimen12 \scriptfont\tw@
    \lineskip\thr@@\fontdimen8 \scriptfont\thr@@
    \lineskiplimit\lineskip
    \ialign{\hfil$\m@th\scriptstyle##$&$\m@th\scriptstyle{}##$\hfil\crcr
      #1\crcr
    }%
  }%
}
\newlength{\tinyspace}
\title{A Gelfand--MacPherson correspondence for quiver moduli}
\author{Hans Franzen}
\date{}
\address{Universit\"at Paderborn, Warburger Str.\ 100, 33098 Paderborn, Germany}
\email{hans.franzen{\hspace{\tinyspace}@\hspace{\tinyspace}}math.upb.de}
\begin{document}
	
	\begin{abstract}
		We show that a semi-stable moduli space of representations of an acyclic quiver can be identified with two GIT quotients by reductive groups. One of a quiver Grassmannian of a projective representation, the other of a quiver Grassmannian of an injective representation. This recovers as special cases the classical Gelfand--MacPherson correspondence and its generalization by Hu and Kim to bipartite quivers, as well as the Zelevinsky map for a quiver of Dynkin type $A$ with the linear orientation.
	\end{abstract}

	\maketitle
	
	\section{Introduction} \label{s:intro}
	
	In \cite{GM:82}, Gelfand and MacPherson use Grassmannians to construct point configurations in projective spaces. They provide a bijection between certain $\GL_n(\C)$-orbits on $(\P^{n-1})^m$ and certain $(\C^\times)^m$-orbits in the Grassmannian $\Gr^n(\C^m)$ of $n$-codimensional subspaces of $\C^m$. Kapranov \cite{Kapranov:93} formulated this correspondence, which he calls the Gelfand--MacPherson correspondence, in terms of GIT quotients. For a tuple of positive integers $a = (a_1,\ldots,a_m)$, consider the unique $\SL_n$-linearization of the ample line bundle $\OO(a) = \OO(a_1,\ldots,a_m)$ on $(\P^{n-1})^m$. It is shown that there exists an ample linearization $\mathcal{L}$ of the action of the torus $T = \{(t_1,\ldots,t_m) \in (\C^\times)^m \mid t_1\ldots t_m = 1\}$ on the Grassmannian $\Gr^n(\C^m)$ for which there is an isomorphism of varieties
	\[
		\Gr^n(\C^m)^{\mathcal{L}\hypsst}/\!\!/T \xto{}{\cong} ((\P^{n-1})^m)^{\OO(a)\hypsst}/\!\!/\SL_n.
	\]
	This isomorphism is provided by an identification of both quotients with a semi-stable GIT quotient of the space of $n \times m$-matrices by the conjugation action of the subgroup $G$ of $\GL_n \times \GL_m$ of all $(g,\diag(t_1,\ldots,t_m))$ such that $\det(g)t_1^{-1}\ldots t_m^{-1} = 1$.
	If the $n \times m$-matrix has rank $n$ then its kernel is an $n$-codimensional subspace of $\C^m$, while if all its columns are non-zero, we obtain a configuration of $m$ points in $\P^{n-1}$.
	
	The semi-stable quotient of the space of $n \times m$-matrices by the group $G$ is a moduli space of semi-stable quiver representations. The quiver is the $m$-subspace quiver (see \Cref{e:ex1.1}).
	This observation is generalized by Hu and Kim in \cite{HK:13}. They provide a Gelfand--MacPherson correspondence for bipartite quivers (they call them quivers of fence type), i.e. quivers in which every vertex is either a source or a sink, and stability parameters which are positive at the sources and negative at the sinks.
	
	In this paper we provide a generalization of Hu and Kim's Gelfand--MacPherson correspondence to any acyclic quiver and arbitrary stability conditions. The idea is as follows. If $M$ is a representation of a quiver $Q$ of dimension vector $\alpha$, semi-stable with respect to a given stability parameter $\theta$, then we find a surjective homomorphism $\phi: P \to M$ from a projective representation; here $P$ depends only on $\theta$, while the homomorphism depends on $M$. This makes $M$ a quotient of $P$ and thus gives a morphism from the locus of all $\theta$-semi-stable representations to the quiver Grassmannian $\Gr^\alpha(P)$ of quotient representations whose dimension vector is $\alpha$. With the same arguments as by Bongartz and Huisgen-Zimmermann in \cite{BHZ:01}, we see that orbits of the group $G(\alpha)$ on the semi-stable locus $R(Q,\alpha)^{\theta\hypsst}$ of the representation variety correspond bijectively to orbits of $\Aut(P)$ on $\Gr^\alpha(P)$. As the automorphism group of $P$ is in general non-reductive and rather complicated, we will replace it by a reductive subgroup and show that we are provided with an isomorphism between the $\theta$-semi-stable moduli space $M^{\theta\hypsst}(Q,\alpha)$ with a semi-stable GIT quotient of $\Gr^\alpha(P)$ by this reductive group -- here semi-stability is with respect to an equivariant ample line bundle determined by $\theta$.
	
	The same construction can be carried out dually. We find an injective homomorphism $\psi: M \to I$ to an injective representation. This yields a map from the semi-stable locus to the quiver Grassmannian $\Gr_\alpha(I)$ of subrepresentations of dimension vector $\alpha$. Again, we find an action of a reductive group on $\Gr_\alpha(I)$ such that the semi-stable quotient with respect to a certain equivariant ample line bundle is isomorphic to the semi-stable moduli space.
	
	These two dual constructions yield two quotients of quiver Grassmannians which are both isomorphic to the same moduli space of quiver representations. In the case of a bipartite quiver and a stability parameter as considered in \cite{HK:13}, we recover the Gelfand--MacPherson correspondence of Hu and Kim.
	
	The case of the trivial stability condition gives, although the semi-stable quiver moduli space is a point, interesting morphisms $\Gr^\alpha(P) \ot R(Q,\alpha) \to \Gr_\alpha(I)$. The group which acts on all these three varieties is $G(\alpha)$ and both morphisms are equivariant with respect to $G(\alpha)$. If $Q$ is the quiver of Dynkin type $A_n$ with the linear orientation, then the map $R(Q,\alpha) \to \Gr_\alpha(I)$ is the so-called Zelevinsky map, see \cite{Zelevinsky:85, LM:98} and \Cref{e:ex2.2}.
	
	We prove the Gelfand--MacPherson correspondence by identifying the quiver Grassmannians $\Gr^\alpha(P)$ and $\Gr_\alpha(I)$ with non-commutative Hilbert schemes. These are special cases of moduli spaces of representations of framed quivers. On these, we find actions of certain reductive groups and ample line bundles which correspond to the given stability parameter $\theta$. We state the main result, i.e.\ the correspondence, in two versions, one for non-commutative Hilbert schemes in \Cref{t:correspondence1} and one for quiver Grassmannians in \Cref{t:correspondence2}.
	
	The paper is organized as follows. In Sections \ref{s:rep_th}, \ref{s:quiver_moduli}, and \ref{s:quiver_gr} we recall basic facts on representations of quivers, quiver moduli, and quiver Grassmannians and we fix notation. In \Cref{s:map_to_Gr}, we describe the constructions of the maps from the semi-stable locus to the quiver Grassmannians. We identify the latter with non-commutative Hilbert schemes in \Cref{s:NCHS} and define an action of a reductive group on those. \Cref{s:descent} is concerned with line bundles on non-commutative Hilbert schemes and the analysis of stability. In \Cref{s:maps_to_NCHS} we define morphisms from the semi-stable locus to non-commutative Hilbert schemes and show that they give isomorphisms on the level of GIT quotients. In \Cref{s:correspondence} we formulate the (two versions of the) main result and give the proof.
	We illustrate our constructions with two running examples.
	
	\subsection*{Acknowledgements}
	We would like to thank Pieter Belmans, Victoria Hoskins, and Markus Reineke for inspiring discussions on the subject and for helpful remarks.

	\section{Representations of quivers} \label{s:rep_th}
	
	We work over a fixed algebraically closed field $k$ throughout. Unless otherwise mentioned, all vector spaces, varieties, and so on are defined over $k$.
	
	Let $Q$ be a quiver. It consists of two finite sets $Q_0$, the set of vertices of $Q$, and $Q_1$, the set of arrows of $Q$, together with two maps $s,t: Q_1 \to Q_0$, assigning to an arrow its source and target vertex. We always assume the quiver $Q$ to be acyclic, i.e.\ it contains no oriented cycles. 
	
	A representation $M$ of $Q$ consists of a collection of vector spaces $M_i$ for $i \in Q_0$ and linear maps $M_a: M_{s(a)} \to M_{t(a)}$ for every $a \in Q_1$. A representation $M$ is called finite-dimensional, if $\bigoplus_{i \in Q_0} M_i$ is a finite-dimensional vector space. A homomorphism $f: M \to N$ is a collection of linear maps $f_i: M_i \to N_i$ such that $N_a f_{s(a)} = f_{t(a)} M_a$ holds for every $a \in Q_1$. We obtain the category of representations of $Q$ and the full subcategory of finite-dimensional representations. For a finite-dimensional representation $M$, the tuple $\dimvect M := (\dim M_i)_i \in \smash{\Z_{\geq 0}^{Q_0}}$ is called the dimension vector of $Q$. 
	
	The path algebra $kQ$ of $Q$ is the vector space spanned by all (oriented) paths in $Q$; this includes a path of length zero $\epsilon_i$ at every vertex $i \in Q_0$. Let $Q_*$ be the set of all paths of $Q$. The multiplication of $kQ$ is on basis elements, i.e.\ on paths, given by
	\[
		q\cdot p = \begin{cases} qp & \text{if } t(p) = s(q) \\ 0 & \text{otherwise,} \end{cases}
	\]
	so by concatenation of paths, if possible. The category of left $kQ$-modules is equivalent to the category of representations of $Q$ by assigning to a left $kQ$-module $M$ the representation consisting of $M_i = \epsilon_iM$ and $M_a(v) = av$ for $v = \epsilon_{s(a)}v \in M_{s(a)}$. In particular we see that the category of representations is abelian and the category of finite-dimensional representations is a full abelian finite length subcategory. We will in the following not always make a distinction between left $kQ$-modules and representations of $Q$.
	
	Let $A = kQ$. For $i \in Q_0$ the left $A$-module $P(i) = A\epsilon_i$ is projective. Under the above equivalence it corresponds to the representation with 
	\[
		P(i)_j = \epsilon_jA\epsilon_i = \bigoplus_{p \in Q_*(i,j)} kp,
	\]
	where $Q_*(i,j)$ denotes the set of paths from $i$ to $j$ in $Q$. There is a natural isomorphism of vector spaces $M_i \cong \Hom_A(P(i),M)$.
	As $Q$ is acyclic, $P(i)$ is finite-dimensional. 
	
	For a right $A$-module $N$ let $D(N) = \Hom(N,k)$, regarded as a left $A$-module. The left $A$-module $I(i) := D(\epsilon_iA)$ is injective. Concretely, 
	\[
		I(i)_j = (\epsilon_iA\epsilon_j)^* = \bigoplus_{q \in Q_*(j,i)} kq^*,
	\]
	where $q^*$ denotes the dual basis element. There is a natural isomorphism of vector spaces $M_i^* \cong \Hom_A(M,I(i))$.
	
	For every left $A$-module $M$, there exist two ``canonical'' resolutions. The canonical projective resolution is given by
	\[\begin{tikzcd}
		0 & {\displaystyle\bigoplus_{a \in Q_1} P(t(a)) \otimes M_{s(a)}} & {\overbrace{\displaystyle\bigoplus_{i \in Q_0} P(i) \otimes M_i}^{=: P}} & M & 0 \\
		0 & {\displaystyle\bigoplus_{a \in Q_1} A\epsilon_{t(a)} \otimes \epsilon_{s(a)}M} & {\displaystyle\bigoplus_{i \in Q_0} A\epsilon_i \otimes \epsilon_iM} & M & 0 \\[-2em]
		& {q \otimes v} & {qa \otimes v - q \otimes av} \\[-2em]
		&& {p \otimes v} & pv
		\arrow[from=1-1, to=1-2]
		\arrow[from=1-2, to=1-3]
		\arrow["\phi", from=1-3, to=1-4]
		\arrow[from=1-4, to=1-5]
		\arrow[from=2-1, to=2-2]
		\arrow[from=2-2, to=2-3]
		\arrow[from=2-3, to=2-4]
		\arrow[from=2-4, to=2-5]
		\arrow[maps to, from=3-2, to=3-3]
		\arrow[maps to, from=4-3, to=4-4]
		\arrow[equals, equals, from=1-2, to=2-2]
		\arrow[equals, equals, from=1-3, to=2-3]
		\arrow[equals, equals, from=1-4, to=2-4]
	\end{tikzcd}\]
	the tensor products in the above diagram are over $k$. The map $\phi$ corresponds under the isomorphisms
	\[
		\Hom_A(P,M) \cong \bigoplus_{i \in Q_0} \Hom_k(M_i,\Hom_A(P(i),M)) \cong \bigoplus_{i \in Q_0} \End_k(M_i)
	\]
	to the tuple $(\id_{M_i})_i$. Similarly, the map $\psi$ in the canonical injective resolution
	\[\begin{tikzcd}
		0 & M & {\underbrace{\displaystyle\bigoplus_{i \in Q_0} I(i) \otimes M_i}_{=: I}} & {\displaystyle\bigoplus_{a \in Q_1} I(s(a)) \otimes M_{t(a)}} & 0
		\arrow[from=1-1, to=1-2]
		\arrow["\psi", from=1-2, to=1-3]
		\arrow[from=1-3, to=1-4]
		\arrow[from=1-4, to=1-5]
	\end{tikzcd}\]
	maps $v \in M$ to $\psi(v) = \sum_{q \in Q_*} q^* \otimes qv$. It corresponds under the isomorphisms
	\[
		\Hom_A(M,I) \cong \bigoplus_{i \in Q_0} \Hom_A(M,I(i)) \otimes M_i \cong \bigoplus_{i \in Q_0} M_i^* \otimes M_i \cong \bigoplus_{i \in Q_0} \End_k(M_i)
	\]
	to the tuple $(\id_{M_i})_i$.
	
	Note that a consequence of the canonical resolutions is that the category of finite-dimensional representations of $Q$ is hereditary, which means that $\Ext_A^i(M,N) = 0$ for all $i \geq 2$ and all representations $M$ and $N$.
	
	The numerical Euler form of the quiver $Q$ is the (non-symmetric) bilinear form $\langle \blank,\blank \rangle: \Z^{Q_0} \times \Z^{Q_0} \to \Z$ defined by
	\[
		\langle \alpha,\beta \rangle = \sum_{i \in Q_0} \alpha_i\beta_i - \sum_{a \in Q_1} \alpha_{s(a)}\beta_{t(a)}.
	\]
	The homological Euler form of the category of representations of $Q$ depends only on the dimension vector of $Q$ and can be computed using the numerical Euler form:
	\[
		\dim \Hom_A(M,N) - \dim \Ext_A^1(M,N) = \langle \dimvect M,\dimvect N \rangle.
	\]

	\section{Quiver moduli} \label{s:quiver_moduli}
	
	Let $Q$ be an acyclic quiver. In the following, all representations are assumed to be finite-dimensional. Let $\alpha \in \smash{\Z_{\geq 0}^{Q_0}}$. Fix vector spaces $V_i$ of dimension $\dim V_i = \alpha_i$. We consider the vector space
	\[
		R(Q,\alpha) := \bigoplus_{a \in Q_1} \Hom(V_{s(a)},V_{t(a)}).
	\]
	An element of $R(Q,\alpha)$ is a representation of $Q$ on the fixed set of vector spaces $(V_i)_i$. On $R(Q,\alpha)$, we have a linear left action of the algebraic group
	\[
		G(\alpha) := \prod_{i \in Q_0} \GL(V_i),
	\]
	where $g = (g_i)_i \in G(\alpha)$ acts on $M = (M_a)_a \in R(Q,\alpha)$ by $g\cdot M = (\smash{g_{t(a)}}M_a\smash{g_{s(a)}^{-1}})_a$. Note that two elements of $R(Q,\alpha)$ are isomorphic as representations of $Q$ if and only if they lie in the same $G(\alpha)$-orbit. The (closed normal) subgroup $\Delta := k^\times \cdot \id$ acts trivially on $R(Q,\alpha)$. We thus obtain an action of the group $PG(\alpha) := G(\alpha)/\Delta$.
	
	In order to construct moduli spaces, we introduce a stability parameter. Let $\theta \in \Z^{Q_0}$. For $\alpha \in \Z^{Q_0}$ we define $\theta(\alpha) := \sum_{i \in Q_0} \theta_i\alpha_i$ and for a representation $M$ of $Q$ define $\theta(M) := \theta(\dimvect M)$. 
	
	\begin{defn} \label{d:stability}
		Let $M \in R(Q,\alpha)$. 
		\begin{enumerate}
			\item $M$ is called $\theta$-\emph{semi-stable} if $\theta(M) = 0$ and $\theta(M') \leq 0$ for all non-zero proper subrepresentations $M'$ of $M$.
			\item $M$ is called $\theta$-\emph{stable} if $\theta(M) = 0$ and $\theta(M') < 0$ for all non-zero proper subrepresentations $M'$ of $M$.
		\end{enumerate}
	\end{defn}
	We denote by $R(Q,\alpha)^{\theta\hypst} \sub R(Q,\alpha)^{\theta\hypsst}$ the loci of $\theta$-stable and $\theta$-semi-stable representations, respectively. These are Zariski open $G(\alpha)$-invariant subsets of $R(Q,\alpha)$. Note that these may be empty. There are examples where $R(Q,\alpha)^{\theta\hypsst}$ is empty and also examples where $R(Q,\alpha)^{\theta\hypst}$ is empty but $R(Q,\alpha)^{\theta\hypsst}$ is not. Observe also that $R(Q,\alpha)^{0\hypsst} = R(Q,\alpha)$.

	\begin{rem}
		The above definition of (semi\nobreakdash-)stability differs from King's original definition in \cite{King:94} by a minus sign. 
	\end{rem}
	
	The full subcategory of $\theta$-semi-stable representations is an abelian subcategory of the category of representations of $Q$, see \cite{Reineke:03}. The $\theta$-stable representations are the simple objects in this category. Schur's lemma implies that the automorphism group of a stable representation is $k^\times$. This shows that $PG(\alpha)$ acts freely on $R(Q,\alpha)^{\theta\hypst}$.

	The notion of $\theta$-(semi\nobreakdash-)stability agrees with Mumford's \cite{GIT:94} notion of (semi\nobreakdash-)stability with respect to an ample equivariant line bundle. The line bundle is given as follows.
	We define a character $\chi_\theta: G(\alpha) \to k^\times$ associated with $\theta$ by
	$$
		\chi_\theta(g) = \prod_i \det(g_i)^{-\theta_i}.
	$$
	Note the minus sign in the exponent; it occurs because of our opposite sign convention in \Cref{d:stability}.
	As $\theta(\alpha) = 0$, 
	the character $\chi_\theta$ descends to a character $\bar{\chi}_\theta: PG(\alpha) \to k^\times$. Let $L(\theta) := L(\bar{\chi}_\theta)$ be the trivial line bundle on $R(Q,\alpha)$ equipped with the $PG(\alpha)$-linearization given by the character $\bar{\chi}_\theta$. King shows in \cite{King:94} that $M$ is $\theta$-semi-stable if and only if it is semi-stable with respect to $L(\theta)$ and $M$ is $\theta$-stable if and only if it is properly stable with respect to $L(\theta)$. 
	
	For a character $\chi$ of $G(\alpha)$ we define
	\[
		\SI(Q,\alpha)_\chi := \{ f \in k[R(Q,\alpha)] \mid f(g^{-1}\cdot M) = \chi(g) f(M) \text{ (all $g \in G(\alpha)$, $M \in R(Q,\alpha)$)} \}.
	\]
	An element of $\SI(Q,\alpha)_\chi$ is called a $\chi$-semi-invariant function. For all $n \geq 0$ holds
	\[
		H^0(R(Q,\alpha),L(\theta)^{\otimes n})^{PG(\alpha)} = \SI(Q,\alpha)_{-n\chi_\theta}.
	\]
	We define
	\[
		M^{\theta\hypsst}(Q,\alpha) := R(Q,\alpha)^{L(\theta)\hypsst}/\!\!/ PG(\alpha) = \Proj\Big( \bigoplus_{n \geq 0} \SI(Q,\alpha)_{-n\chi_\theta} \Big)
	\]
	and let $\pi: R(Q,\alpha)^{\theta\hypsst} \to M^{\theta\hypsst}(Q,\alpha)$ be the quotient morphism. Moreover, we set
	\[
		M^{\theta\hypst}(Q,\alpha) := R(Q,\alpha)^{L(\theta)\hypst}/ PG(\alpha) = \pi(R(Q,\alpha)^{\theta\hypst}).
	\]
	We denote the restriction of the quotient map also by $\pi: R(Q,\alpha)^{\theta\hypst} \to M^{\theta\hypst}(Q,\alpha)$. 
	
	\begin{defn}
		We call $M^{\theta\hypsst}(Q,\alpha)$ and $M^{\theta\hypst}(Q,\alpha)$ the $\theta$-\emph{semi-stable} and the $\theta$-\emph{stable moduli space} of $Q$ of dimension vector $\alpha$, respectively.
	\end{defn}
	
	A result by Le Bruyn and Procesi \cite{LP:90} implies that, for an acyclic quiver, there are no non-constant $G(\alpha)$-invariant regular functions on $R(Q,\alpha)$. Therefore $M^{\theta\hypsst}(Q,\alpha)$ is projective. The quotient $\pi: R(Q,\alpha)^{\theta\hypst} \to M^{\theta\hypst}(Q,\alpha)$ is a geometric $PG(\alpha)$-quotient with trivial isotropy groups on the total space. If $\operatorname{char}(k) = 0$ then Luna's \'{e}tale slice theorem \cite{Luna:73} implies that the stable quotient map is a principal left $PG(\alpha)$-bundle in the \'{e}tale topology.
	
	Let us look at two examples, which will be the running examples for the text.
	
	\begin{ex} \label{e:ex1.1}
		Let $m \geq 1$ and $n \geq 2$ be integers and let $Q$ be the $m$-subspace quiver:
		\[\begin{tikzcd}
			&[-1em]&[-1em] s &[-1em]\\
			q_1 & q_2 & \ldots & q_m
			\arrow[from=2-1, to=1-3]
			\arrow[from=2-2, to=1-3]
			\arrow[from=2-4, to=1-3]
		\end{tikzcd}\]
		Consider the dimension vector $\alpha = (1,\ldots,1,n)$, i.e. $1$ at every source and $n$ at the sink. A representation of $Q$ of dimension vector $\alpha$ is a tuple $(v_1,\ldots,v_m)$ of vectors $v_i \in k^n$. The group $G(\alpha)$ is $(k ^\times)^m \times \GL_n(k)$ which acts on $R(Q,\alpha) = (k^n)^m$ by
		\[
			(t_1,\ldots,t_m,g) \cdot (v_1,\ldots,v_m) = (t_1^{-1}gv_1,\ldots,t_m^{-1}gv_m)
		\]
		Let $a = (a_1,\ldots,a_m)$ be a sequence of positive integers and let $|a| = a_1+\ldots+a_m$. Define $\theta = (na_1,\ldots,na_m,-|a|)$. For $I \sub \{1,\ldots,m\}$ let $|a|_I := \sum_{i \in I} a_i$. A representation is $\theta$-semi-stable (resp.\ $\theta$-stable) if and only if $v_i \neq 0$ for all $i=1,\ldots,m$ and $\sum_{i \in I} k\cdot v_i = k^n$ for all subsets $I \sub \{1,\ldots,m\}$ for which $|a|_I > |a|/2$ (resp.\ $|a|_I \geq |a|/2$). 
	\end{ex}

	\begin{ex} \label{e:ex2.1}
		Let $Q$ be the following orientation of the Dynkin diagram of type $A_n$:
		\[
			1 \to 2 \to \ldots \to n
		\]
		Let $\alpha = (\alpha_1,\ldots,\alpha_n)$ be an arbitrary dimension vector. A representation of $Q$ of dimension vector $\alpha$ is a tuple $(M_1,\ldots,M_{n-1})$ of matrices $M_i \in M_{\alpha_{i+1}\times \alpha_i}(k)$. The group $G(\alpha) = \GL_{\alpha_1}(k) \times \ldots \times \GL_{\alpha_n}(k)$ acts on $R(Q,\alpha)$ by
		\[
			(g_1,\ldots,g_n)\cdot (M_1,\ldots,M_{n-1}) = (g_2M_1g_1^{-1},\ldots,g_nM_{n-1}g_{n-1}^{-1}).
		\]
		Let $\theta = 0$. Then every representation is $\theta$-semi-stable. A representation is $\theta$-stable if and only if it is simple; in this case $\alpha = e_i := (0,\ldots,1,\ldots,0)$ for some vertex $i \in \{1,\ldots,n\}$. The $\theta$-semi-stable moduli space
		\[
			M^{\theta\hypsst}(Q,\alpha) =: M^{\operatorname{ssimp}}(Q,\alpha)
		\]
		consists of a single point, because the semi-simple representation of dimension vector $\alpha$ is contained in the $G(\alpha)$-orbit closure of every other representation. The $\theta$-stable moduli space is either empty (if $\alpha$ is not a simple root) or a point (if $\alpha = e_i$ for some vertex $i$).
		
		\noindent
		Although the moduli spaces in this case are trivial, the Gelfand--MacPherson correspondence for them will still be interesting.
	\end{ex}

	\section{Quiver Grassmannians} \label{s:quiver_gr}
	
	Let $M$ be a representation of an acyclic quiver $Q$ of dimension vector $\alpha := \dimvect M$ and let $\beta \in \smash{\Z_{\geq 0}^{Q_0}}$ such that $\beta_i \leq \alpha_i$ for all $i \in Q_0$. Inside the product $\prod_{i \in Q_0} \Gr_{\beta_i}(M_i)$ we consider the Zariski closed subset
	\[
		\Gr_\beta(M) := \Big\{ U = (U_i)_{i \in Q_0} \in \prod_{i \in Q_0} \Gr_{\beta_i}(M_i) \mid M_a(U_{s(a)}) \sub U_{t(a)} \text{ (all $a \in Q_1$)} \Big\}.
	\]
	It can be interpreted as the zero locus of a section of a vector bundle on $\prod_{i \in Q_0} \Gr_{\beta_i}(M_i)$. Therefore $\Gr_\beta(M)$ has a natural structure of a closed subscheme of $\prod_{i \in Q_0} \Gr_{\beta_i}(M_i)$; it is in particular projective. Its set of $k$-valued points is the set of subrepresentations of $M$ whose dimension vector is $\beta$. 
	
	\begin{defn}
		We call $\Gr_\beta(M)$ the \emph{quiver Grassmannian of} $\beta$-\emph{dimensional subrepresentations} of $M$. We also define $\Gr^\beta(M) := \Gr_{\alpha-\beta}(M)$ and call it the \emph{quiver Grassmannian of} $\beta$-\emph{dimensional quotients} of $M$.
	\end{defn}
	
	For an arbitrary representation $M$, no favorable geometric properties of quiver Grassmannians may be expected. In fact every closed subscheme of a projective space over $k$ arises as a quiver Grassmannian. This result was proved, in various levels of generality, in several ways by several authors, e.g. \cite{Huisgen-Zimmermann:07, Reineke:13, Hille:15, Ringel:18}.
	
	However, if $M$ is rigid, which means $\Ext_{kQ}^1(M,M) = 0$, then $\Gr_\beta(M)$ is smooth \cite{CR:08} and irreducible \cite{Wolf:09}. Over $k = \C$ it has no odd cohomology \cite{Nakajima:11, Qin:12} and the cycle map is an isomorphism \cite{CEFR:21}. The dimension of $\Gr_\beta(M)$ is $\langle \beta,\alpha-\beta \rangle$.

	\section{A map to quiver Grassmannians} \label{s:map_to_Gr}
	
	Let $Q$ be an acyclic quiver, $A = kQ$ the path algebra, $\alpha \in \smash{\Z_{\geq 0}^{Q_0}}$ a dimension vector, and $\theta \in \Z^{Q_0}$ a stability parameter. Fix vector spaces $V_i$ of dimension $\alpha_i$ for every $i \in Q_0$. We define 
	\begin{align*}
		Q_0^+:= Q^+_{0,\theta} &:= \{ i \in Q_0 \mid \theta_i \geq 0 \} \\
		Q_0^- := Q^-_{0,\theta} &:= \{ i \in Q_0 \mid \theta_i \leq 0 \}.
	\end{align*}
	
	\begin{rem}
		All objects decorated with a $+$ or a $-$ which we will introduce in the following depend on $\theta$. We will mostly suppress this dependency in the notation.
	\end{rem}

	Define a projective representation $P^+_\theta$ and an injective representation $I^-_\theta$ as follows:
	\begin{align*}
		P^+ := P^+_\theta &:= \bigoplus_{i \in Q_0^+} P(i) \otimes V_i &
		I^- := I^-_\theta &= \bigoplus_{i \in Q_0^-} I(i) \otimes V_i \\
		&= \bigoplus_{\substack{p \in Q_*\\ s(p) \in Q_0^+}} kp \otimes V_{s(p)} &
		&= \bigoplus_{\substack{q \in Q_*\\ t(q) \in Q_0^-}} kq^* \otimes V_{t(q)}
	\end{align*}
	
	Let $M \in R(Q,\alpha)$, consisting of linear maps $M_a: V_{s(a)} \to V_{t(a)}$ for every $a \in Q_1$. As in \Cref{s:rep_th}, we see that
	\begin{align*}
		\Hom_A(P^+,M) &\cong \bigoplus_{i \in Q_0^+} \End_k(V_i) &
		\Hom_A(M,I^-) &\cong \bigoplus_{i \in Q_0^-} \End_k(V_i).
	\end{align*}
	Let $\phi_M: P^+ \to M$ and $\psi_M: M \to I^-$ be the two homomorphisms which correspond to the tuples $(\id_{V_i})_{i \in Q_0^\pm}$. Concretely, $\phi_M$ applied to an element $p \otimes v$ with $v \in V_{s(p)}$ is
	\[
		\phi_M(p \otimes v) = pv = M_p(v)
	\]
	while $\psi_M$ applied to $v \in V_i$ gives
	\[
		\psi_M(v) = \sum_{\subalign{q &\in Q_*\\ t(q) &\in Q_0^-}} q^* \otimes qv = \sum_{\subalign{q &\in Q_*\\ t(q) &\in Q_0^-\\ s(q) &= i}} q^* \otimes M_q(v).
	\]
	
	\begin{lem} \label{l:semi-stable_surjective_injective}
		Let $M \in R(Q,\alpha)$ be $\theta$-semi-stable. Then
		\begin{enumerate}
			\item $\phi_M$ is surjective,
			\item $\psi_M$ is injective.
		\end{enumerate}
	\end{lem}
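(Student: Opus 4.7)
The plan is to treat the two statements in parallel, each time identifying the ``identity part'' of the canonical map and combining it with the observation that $\theta$-semi-stability forces $\theta(M') \le 0$ for every subrepresentation $M' \subseteq M$ (either by \Cref{d:stability} or trivially when $M'$ equals $0$ or $M$).

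For (1), I would set $N := \operatorname{im}(\phi_M) \subseteq M$. Unwinding the definition of $\phi_M$, for $i \in Q_0^+$ and $v \in V_i$ the element $\epsilon_i \otimes v \in P^+$ satisfies $\phi_M(\epsilon_i \otimes v) = v$. Hence $N_i = V_i$, i.e.\ $(M/N)_i = 0$, for every $i \in Q_0^+$. Consequently
\[
  \theta(M/N) = \sum_{i \notin Q_0^+} \theta_i \dim(M/N)_i \le 0,
\]
since $\theta_i < 0$ for all $i \notin Q_0^+$. Combining $\theta(N) \le 0$ with $\theta(N) + \theta(M/N) = \theta(M) = 0$ gives $\theta(M/N) = 0$; as each summand above is non-positive, this forces $\dim(M/N)_i = 0$ for every $i \notin Q_0^+$ as well, so $M/N = 0$ and $\phi_M$ is surjective.

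For (2), I would dualise. Set $K := \ker(\psi_M) \subseteq M$. From the explicit formula $\psi_M(v) = \sum_{q} q^* \otimes M_q(v)$, the trivial path $q = \epsilon_i$ (which occurs in the sum precisely when $i \in Q_0^-$) contributes the term $\epsilon_i^* \otimes v$, lying in its own summand of $I^-$, so $\psi_M|_{V_i}$ is injective and $K_i = 0$ for every $i \in Q_0^-$. This yields
\[
  \theta(K) = \sum_{i \notin Q_0^-} \theta_i \dim K_i \ge 0,
\]
because $\theta_i > 0$ for $i \notin Q_0^-$. Semi-stability gives $\theta(K) \le 0$, so $\theta(K) = 0$, and non-negativity of each summand forces $K_i = 0$ for every $i \notin Q_0^-$ as well, hence $K = 0$ and $\psi_M$ is injective.

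I do not expect a real obstacle: both parts reduce to the same slope-type pigeonhole. The only point that deserves care is the bookkeeping, namely that $Q_0 \setminus Q_0^+$ consists exclusively of vertices with $\theta_i$ \emph{strictly} negative (and $Q_0 \setminus Q_0^-$ of vertices with $\theta_i$ \emph{strictly} positive); it is this strict-sign feature that lets the vanishing of $\theta(M/N)$, respectively $\theta(K)$, propagate to the vanishing of the full module.
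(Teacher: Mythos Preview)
Your proof is correct, and it differs from the paper's argument in a clean way. The paper argues by contradiction: if $\phi_M$ is not surjective it locates a vertex $j$ in the cokernel that is minimal with respect to the path order, produces a surjection $M \twoheadrightarrow S(j)$ onto the simple at $j$, and then observes $\theta(S(j)) = \theta_j < 0$ violates semi-stability (dually for $\psi_M$, using a maximal vertex and a simple subrepresentation). You instead work with the whole image $N = \operatorname{im}\phi_M$ (resp.\ kernel $K$) at once: after noting that the trivial-path contributions force $N_i = V_i$ for $i \in Q_0^+$ (resp.\ $K_i = 0$ for $i \in Q_0^-$), the remaining entries of $M/N$ (resp.\ $K$) sit entirely over vertices where $\theta$ has a fixed strict sign, and a single slope count against $\theta(N) \le 0$ forces them all to vanish. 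Your route avoids the choice of a minimal/maximal vertex and the passage to a simple module; the paper's route makes the obstruction more concrete by naming the offending simple. Both are short and equally valid.
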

	
	\begin{proof}
		\leavevmode
		\begin{enumerate}
			\item Assume that $\phi_M$ were not surjective. Let $C := \coker \phi_M$ and let $j \in Q_0$ such that $C_j \neq 0$. Then $j \in Q_0 \setminus Q_0^+$. Assume that $j$ is minimal in the sense that $C_i = 0$ for all $i \in Q_0$ for which there exists a path of positive length $i \to j$. Then there exists a surjective homomorphism $C \to S(j)$, which yields a surjective homomorphism $M \to S(j)$. But $\theta(S(j)) = \theta_j < 0$ 
			which contradicts $\theta$-semi-stability of $M$.
			\item Suppose $\psi_M$ were not injective. Let $K := \ker \psi_M$ and let $j \in Q_0$ be a maximal vertex such that $K_j \neq 0$. Then $j \in Q_0 \setminus Q_0^-$. By maximality there exists an injective homomorphism $S(j) \to K$. But as $\theta(S(j)) = \theta_j > 0$, we obtain a contradition to semi-stability of $M$. \qedhere
		\end{enumerate}
	\end{proof}

	We define maps $\phi: R(Q,\alpha)^{\theta\hypsst} \to \Gr^\alpha(P^+)$ and $\psi: R(Q,\alpha)^{\theta\hypsst} \to \Gr_\alpha(I^-)$ by
	\begin{align*}
		\phi(M) &= \ker(\phi_M) & 
		\psi(M) &= \im(\psi_M).
	\end{align*}
	
	\begin{ex} \label{e:ex1.2}
		Recall the notation of \Cref{e:ex1.1}. A $\theta$-semi-stable representation of dimension vector $\alpha$ is a tuple $(v_1,\ldots,v_m)$ of non-zero vectors in $k^n$ for which $\{v_i \mid i \in I\}$ spans $k^n$, whenever $I \sub \{1,\ldots,m\}$ is a subset such that $|a|_I > |a|/2$.
		
		\noindent
		The projective representation $P^+$ is
		\[\begin{tikzcd}
			&[-.5em]&[-.5em] {k^m} &[-.5em] \\
			k & k & \ldots & k.
			\arrow["{e_1}", from=2-1, to=1-3]
			\arrow["{e_2}"', from=2-2, to=1-3]
			\arrow["{e_m}"', from=2-4, to=1-3]
		\end{tikzcd}\]
		Therefore, the Grassmannian $\Gr^\alpha(P^+)$ identifies with $\Gr^n(k^m)$, the Grassmannian of $n$-codimensional subspaces of $k^m$. Given a $\theta$-semi-stable representation consisting of $(v_1,\ldots,v_m)$, the map $\phi$ assigns to it the kernel of the full rank matrix $(v_1 \mid \ldots \mid v_m)$.
		
		\noindent
		The representation $I^-$ is a direct sum of $n$ copies of the indecomposable injective representation at the sink; explicitly, $I^-$ is
		\[\begin{tikzcd}
			&[-.5em]&[-.5em] {k^n} &[-.5em] \\
			{k^n} & {k^n} & \ldots & {k^n}.
			\arrow["\id", from=2-1, to=1-3]
			\arrow["\id"', from=2-2, to=1-3]
			\arrow["\id"', from=2-4, to=1-3]
		\end{tikzcd}\]
		Therefore, $\Gr_\alpha(I^-)$ identifies with $(\P^{n-1})^m$. A $\theta$-semi-stable representation consisting of $(v_1,\ldots,v_m)$ is mapped by $\psi$ to $(\langle v_1 \rangle,\ldots,\langle v_m \rangle)$, the tuple of lines spanned by the non-zero vectors $v_i$.
	\end{ex}
	
	\begin{ex} \label{e:ex2.2}
		Recall \Cref{e:ex2.1}. Write $V_i := k^{\alpha_i}$ and let $V := V_1 \oplus \ldots \oplus V_n$.
		
		\noindent
		Let us treat the injective representation $I^- = I$ first. In this case, we will recover a map considered by Zelevinsky in \cite{Zelevinsky:85}. The representation $I$ is given as
		\[
			V_1 \oplus \ldots \oplus V_n \xto{=: I_{[1,2]}}{\begin{pmatrix} 0 & \id & 0 & \ldots & 0 \\ 0 & 0 & \id & \ddots & \vdots \\ \vdots & \vdots & \ddots & \ddots & 0 \\ 0 & 0 & \ldots & 0 & \id \end{pmatrix}} V_2 \oplus \ldots \oplus V_n \to \ldots \to V_{n-1} \oplus V_n \xto{=: I_{[n-1,n]}}{ \begin{pmatrix} 0 & \id \end{pmatrix}} V_n
		\]
		Denote $E_i := V_1 \oplus \ldots \oplus V_i$. It is the kernel of the composition 
		\[
			I_{[1,i+1]}: V_1 \oplus \ldots \oplus V_n \to V_{i+1} \oplus \ldots \oplus V_n 
		\]		
		in the above diagram. This gives a flag of subspaces $E_1 \sub E_2 \sub \ldots \sub E_n$, which we consider as the standard flag. Let $E'_i := V_{\alpha_{i+1}} \oplus \ldots \oplus V_n$. This yields a flag $E'_1 \supseteq \ldots \supseteq E'_{n-1} \supseteq E'_n$ such that $E_i \oplus E'_i = 0$. 
		
		Let $(U_1,\ldots,U_n) \in \Gr_\alpha(I)$. It consists of $\alpha_i$-dimensional subspaces $U_i$ of $V_i \oplus \ldots \oplus V_n$ such that $I_{[i,i+1]}$ maps $U_i$ into $U_{i+1}$. By taking the inverse image of $U_i$ under $I_{[1,i]}$, we obtain a flag
		\[
			F_1 \sub F_2 \sub \ldots \sub F_n
		\]
		of subspaces of $V$ such that $\dim F_i = \alpha_1 + \ldots + \alpha_i$ and such that $F_i \supseteq E_{i-1}$. We identify the variety of partial flags $F_*$ of subspaces of dimension $\dim F_i = \alpha_1+\ldots+\alpha_i$ with $\GL(V)/H$, where $H$ is the parabolic subgroup of block upper triangular matrices
		\[
			\bordermatrix{ ~ & V_1 & V_2 & \ldots & V_n \cr
				V_1 & * & * & \ldots & * \cr
				V_2 &   & * & \ldots & * \cr
				\ \vdots & & & \ddots & \vdots \cr
				V_n & & & & *
			};
		\]
		it is the parabolic subgroup which fixes the standard flag $E_*$.
		A right coset $gH$ corresponds to the flag stabilized by the parabolic $gHg^{-1}$.
		We obtain a map $\Gr_\alpha(I) \to \GL(V)/H$.
		It can be shown that $\Gr_\alpha(I) \to \GL(V)/H$ is algebraic and restricts to an isomorphism onto the closed subvariety
		\[
			\Omega := \{ F_* \in \GL(V)/H \mid F_i \supseteq E_{i-1} \text{ for $i=2,\ldots,n$} \}.
		\] 
		(Note that the natural scheme structure of $\Omega$ is reduced and that $\Omega$ is irreducible.)
		As the flag $E_*$ is stabilized by $H$, we see that $\Omega$ is $H$-invariant. The subgroup $H$ acts on $\GL(V)/H$ with finitely many orbits, so $\Omega$ must be a closure of an $H$-orbit, i.e.\ a Schubert variety.
		
		\noindent		
		Now to the map $\psi: R(Q,\alpha) \to \Gr_\alpha(I)$. Let $M$ be a representation consisting of $(M_1,\ldots,M_{n-1})$ with $M_i \in \Hom(V_i,V_{i+1})$. The map $\psi_M: M \to I$ in the injective resolution is given by
		\[\begin{tikzcd}
		{V_1\oplus \ldots \oplus V_n} & \ldots & {V_{n-1} \oplus V_n} & {V_n} \\[5em]
		{V_1} & \ldots & {V_{n-1}} & {V_n.}
		\arrow["{\begin{pmatrix} \id \\ M_1 \\ M_2M_1 \\ \vdots \\ M_{n-1}\ldots M_1 \end{pmatrix}}", from=2-1, to=1-1]
		\arrow["{M_1}", from=2-1, to=2-2]
		\arrow[from=2-2, to=2-3]
		\arrow["{M_{n-1}}", from=2-3, to=2-4]
		\arrow["{I_{[1,2]}}", from=1-1, to=1-2]
		\arrow[from=1-2, to=1-3]
		\arrow["{I_{[n-1,n]}}", from=1-3, to=1-4]
		\arrow["{\begin{pmatrix} \id \\ M_{n-1} \end{pmatrix}}"', from=2-3, to=1-3]
		\arrow["\id"', from=2-4, to=1-4]
		\end{tikzcd}\]
		Now take $\im \psi_M \in \Gr_\alpha(I)$, form the associated flag $F_*$ and consider the corresponding right coset $gH \in \GL(V)/H$. It is the right coset of
		\[
			g = g_M = \begin{pmatrix} \id & 0 & \ldots & \ldots & 0 \\ M_1 & \id & \ddots & & \vdots \\ M_2M_1 & M_2 & \ddots & \ddots & \vdots \\ \vdots & \vdots & \ddots & \id & 0 \\ M_{n-1}\ldots M_1 & M_{n-1}\ldots M_2 & \ldots & M_{n-1} & \id  \end{pmatrix}
		\]
		The morphism $\zeta: R(Q,\alpha) \to \Omega$ given by $\zeta(M) =  g_MH$ is the map which Zelevinsky constructs in \cite{Zelevinsky:85}. It is often referred to as the Zelevinsky map. Lakshmibai and Magyar argue in the proof of the main theorem of \cite{LM:98} that $\zeta$ provides an isomorphism from $R(Q,\alpha)$ to the intersection $\Omega \cap O$ with the opposite open Schubert cell
		\[
			O = \{ F_* \in \GL(V)/H \mid F_i \cap E'_i = 0 \text{ for } i=1,\ldots,n \}.
		\]
		
		\noindent
		The representation $P^+ = P$ is
		\[
			V_1 \xto{=: P_{[1,2]}}{\begin{pmatrix} \id \\ 0 \end{pmatrix}} V_1 \oplus V_2 \to \ldots \to V_1 \oplus \ldots \oplus V_{n-1} \xto{=: P_{[n-1,n]}}{\begin{pmatrix} \id & 0 & \ldots & 0 \\ 0 & \id & \ddots & \vdots \\ \vdots & \ddots & \ddots & 0 \\ 0 & \ldots & 0 & \id \\ 0 & \ldots & 0 & 0 \end{pmatrix}} V_1 \oplus \ldots \oplus V_n
		\]
		Setting $P_{[i,n]}: V_1 \oplus \ldots \oplus V_i \to V_1 \oplus \ldots \oplus V_n$, the subspace $E_i$ is the image of $P_{[i,n]}$. For $(U_1,\ldots,U_n) \in \Gr^\alpha(P)$, which is a tuple of subspaces $U_i \sub V_1 \oplus \ldots \oplus V_i$ of codimension $\alpha_i$ such that $P_{[i,i+1]}(U_i) \sub U_{i+1}$, we identify $U_i$ with its image under the map $P_{[i,n]}$. Then $\Gr^\alpha(P)$ identifies with the space of flags
		\[
			U_1 \sub U_2 \sub U_3 \sub \ldots \sub U_n
		\]
		of subspaces of $V$ of dimension $\dim U_i = \alpha_1+\ldots+\alpha_{i-1}$ such that $U_i \sub E_i$. We re-index $F_i := U_{i+1}$ for $i=1,\ldots,n-1$ and obtain a flag $F_1 \sub F_2 \sub F_3 \sub \ldots \sub F_n$. Therefore, we get a map $\Gr^\alpha(P) \to \GL(V)/H$ which can be shown to be algebraic and to induce an isomorphism from $\Gr^\alpha(P)$ onto the closed subvariety 
		\[ 
			\Upsilon := \{ F_* \in \GL(V)/H \mid F_i \sub E_{i+1} \text{ for } i=1,\ldots,n-1 \}.
		\]
		The subvariety is irreducible and $H$-invariant, hence a Schubert variety.
		
		\noindent
		Now to $\phi: R(Q,\alpha) \to \Gr^\alpha(P)$. Let $M$ be a representation of dimension vector $\alpha$ given by $(M_1,\ldots,M_{n-1})$. Then the standard projective resolution  
		\[
			0 \to P^1 \to P \xto{}{\phi_M} M \to 0
		\]
		is given by
		\[\begin{tikzcd}[ampersand replacement=\&]
			{0} \& {V_1} \& \ldots \& {V_1 \oplus \ldots \oplus V_{n-1}} \\[6em]
			{V_1} \& {V_1 \oplus V_2} \& \ldots \& {V_1 \oplus \ldots \oplus V_n} \\
			{V_1} \& {V_2} \& \ldots \& {V_n}
			\arrow[from=1-1, to=2-1]
			\arrow["{\begin{pmatrix} \id \\ -M_1 \end{pmatrix}}", from=1-2, to=2-2]
			\arrow["{\begin{pmatrix} \id & 0 & \ldots & 0 \\ -M_1 & \id & \ddots & \vdots \\ 0 & -M_2 & \ddots & 0 \\ \vdots & \ddots & \ddots & \id \\ 0 & \ldots & 0 & -M_{n-1} \end{pmatrix}}", from=1-4, to=2-4]
			\arrow[from=1-1, to=1-2]
			\arrow[from=1-2, to=1-3]
			\arrow[from=1-3, to=1-4]
			\arrow["\id"', from=2-1, to=3-1]
			\arrow["{(M_1,\id)}", from=2-2, to=3-2]
			\arrow["{(M_{n-1}\ldots M_1,\ldots,M_{n-1},\id)}", from=2-4, to=3-4]
			\arrow["{P_{[1,2]}}", from=2-1, to=2-2]
			\arrow[from=2-2, to=2-3]
			\arrow["{P_{[n-1,n]}}", from=2-3, to=2-4]
			\arrow["{M_1}", from=3-1, to=3-2]
			\arrow[from=3-2, to=3-3]
			\arrow["{M_{n-1}}", from=3-3, to=3-4]
		\end{tikzcd}\]
		For $\ker \phi_M = \im(P^1 \to P)$, we consider the associated flag $F_* \in \Upsilon$ and the corresponding coset $hH \in \GL(V)/H$. It is represented by
		\[
			h = h_M = \begin{pmatrix} \id & 0 & \ldots & \ldots & 0 \\ -M_1 & \id & \ddots & & \vdots \\ 0 & -M_2 & \ddots & \ddots & \vdots \\ \vdots & \ddots & \ddots & \id & 0 \\ 0 & \ldots & 0 & -M_{n-1} & \id  \end{pmatrix}.
		\]
		We obtain a morphism $\eta: R(Q,\alpha) \to \Upsilon$ given by $\eta(M) = h_MH$. We call it the dual Zelevinsky map. In a similar way as in \cite[1.3 Thm., 2.2 Thm.]{LM:98}, we can see that $\eta$ provides an isomorphism onto $\Upsilon \cap O$.
	\end{ex}

	In the next sections, we will show that $\phi$ and $\psi$ induce an isomorphism from the (semi\nobreakdash-)stable moduli space to a GIT quotient of the quiver Grassmannian by a reductive group.
	In order to state and prove this result, it will be convenient to interpret the quiver Grassmannians $\Gr^\alpha(P^+)$ and $\Gr_\alpha(I^-)$ as non-commutative Hilbert schemes. We will do this in the following section.
	
	\section{Non-commutative Hilbert schemes} \label{s:NCHS}
	
	We will first introduce the construction of non-commutative Hilbert schemes in general. We largely follow Reineke's exposition in \cite{Reineke:05}. Note however that our notation differs slightly from his. Let $Q$ be an acyclic quiver and $\beta \in \smash{\Z_{\geq 0}^{Q_0}}$ a dimension vector; later $\beta$ will be the dimension vectors $\alpha^\pm$ considered in \Cref{s:map_to_Gr}. Consider the following two framing constructions.
	
	Let $Q^\beta$ be the quiver consisting of $(Q^\beta)_0 = \{0\} \sqcup Q_0$ and $\beta_i$ arrows $0 \to i$ for every $i \in Q_0$. For any dimension vector $\alpha \in \smash{\Z_{\geq 0}^{Q_0}}$ define the dimension vector $(1,\alpha)$ of $Q^\beta$ by $(1,\alpha)_0 = 1$ and $(1,\alpha)_i = \alpha_i$ for every $i \in Q_0$.
	
	On the other hand, let $Q_\beta$ be the quiver with $(Q_\beta)_0 = Q_0 \sqcup \{\infty\}$ and $\beta_i$ arrows $i \to \infty$ for every $i \in Q_0$. Define the dimension vector $(\alpha,1)$ as before.
	
	Fix vector spaces $V_i$ of dimension $\alpha_i$ and $E_i$ of dimension $\beta_i$. Let 
	\begin{align*}
		P_\beta &= \bigoplus_{i \in Q_0} P(i) \otimes E_i &
		I_\beta &= \bigoplus_{i \in Q_0} I(i) \otimes E_i \\
			&= \bigoplus_{p \in Q_*} kp \otimes E_{s(p)} &
			&= \bigoplus_{q \in Q_*} kq^* \otimes E_{t(p)}
	\end{align*}
	For every $M \in R(Q,\alpha)$ there are isomorphisms 
	\begin{align*}
		\Hom_A(P_\beta,M) &\cong \bigoplus_{i \in Q_0} \Hom_k(E_i,V_i) &
		\Hom(M,I_\beta) &\cong \bigoplus_{i \in Q_0} \Hom_k(V_i,E_i). 
	\end{align*}
	Let $\phi_{M,A}: \smash{P_\beta} \to M$ be the homomorphism corresponding to a tuple $A = (A_i)_i \in \bigoplus_i \Hom_k(E_i,V_i)$ and let $\psi_{M,B}: M \to \smash{I_\beta}$ correspond to $B = (B_i)_i \in \bigoplus_i \Hom_k(V_i,E_i)$. They are given as follows: $\phi_{M,A}$ maps $p \otimes w$ with $w \in E_{s(p)}$ to
	\[
		\phi_{M,A}(p \otimes w) = M_pA_{s(p)}(w),
	\]
	while $\psi_{M,B}$ applied to $v \in V_i$ gives
	\[
		\psi_{M,B}(v) = \sum_{\substack{q \in Q_*\\ s(q) = i}} q^* \otimes B_{t(q)}M_q(v).
	\]

	We have
	\[
		R(Q^\beta,(1,\alpha)) = R(Q,\alpha) \oplus \bigoplus_{i \in Q_0} \Hom_k(E_i,V_i)
	\]
	and $G(1,\alpha) = k^\times \times G(\alpha)$. An element $(t,g)$ of the group $G(1,\alpha)$ acts on $(M,A) \in R(Q^\beta,(1,\alpha))$ by
	\[
		(t,g) \cdot (M,A) = ((g_{t(a)}M_ag_{s(a)}^{-1})_a,(g_iA_it^{-1})_i).
	\]
	The group $PG(1,\alpha)$ is isomorphic to $G(\alpha)$ and the induced group action on $R(Q^\beta,(1,\alpha))$ is given by $g \cdot (M,A) = ((g_{t(a)}M_a\smash{g_{s(a)}^{-1}})_a,(g_iA_i)_i)$.
	
	For the other framed quiver, we get
	\[
		R(Q_\beta,(\alpha,1)) = R(Q,\alpha) \oplus \bigoplus_{i \in Q_0} \Hom_k(V_i,E_i).
	\]
	The group $G(\alpha,1)$ is also $G(\alpha) \times k^\times$ and the induced action of $PG(\alpha,1) \cong G(\alpha)$ on $R(Q_\beta,(\alpha,1))$ is $g\cdot (M,B) = ((g_{t(a)}M_a\smash{g_{s(a)}^{-1}})_a,(B_i\smash{g_i^{-1}})_i)$.
	
	Consider the following stability parameters $c^\alpha$ for $Q^\beta$ and $c_\alpha$ for $Q_\beta$ given by
	\begin{align*}
		(c^\alpha)_0 &= \left|\alpha\right| & 
		(c_\alpha)_\infty &= -\left|\alpha\right| \\
		(c^\alpha)_i &= -1 &
		(c_\alpha)_i &= 1.
	\end{align*}
	In the above equations, $\left|\alpha\right|$ denotes the 1-norm of $\alpha$; note that $\alpha$ has non-negative entries, so $\left|\alpha\right| = \sum_{i \in Q_0} \alpha_i$.
	
	\begin{lem}[Engel--Reineke] \label{l:engel_reineke}
		Let $M \in R(Q,\alpha)$, let $A \in \bigoplus_{i \in Q_0} \Hom_k(E_i,V_i)$, and let $B \in \bigoplus_{i \in Q_0} \Hom_k(V_i,E_i)$.
		\begin{enumerate}
			\item The following are equivalent:
			\begin{enumerate}
				\item $(M,A)$ is $c^\alpha$-semi-stable.
				\item $(M,A)$ is $c^\alpha$-stable.
				\item No proper subrepresentation of $M$ contains $\im A := \smash{\bigoplus_{i \in Q_0}} \im A_i$.
				\item $\phi_{M,A}: \smash{P_\beta} \to M$ is surjective.
			\end{enumerate}
			\item The following are equivalent:
			\begin{enumerate}
				\item $(M,B)$ is $c_\alpha$-semi-stable.
				\item $(M,B)$ is $c_\alpha$-stable.
				\item Non non-zero subrepresentation of $M$ is contained in $\ker B := \smash{\bigoplus_{i \in Q_0}} \ker B_i$.
				\item $\psi_{M,B}: M \to \smash{I_\beta}$ is injective.
			\end{enumerate}
		\end{enumerate}
	\end{lem}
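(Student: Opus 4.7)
The plan is to analyze subrepresentations of the framed representations directly; part (2) will be proved by the dual argument. For part (1), a subrepresentation of $(M,A) \in R(Q^\beta,(1,\alpha))$ has dimension vector of the form $(d_0,\gamma)$ with $d_0 \in \{0,1\}$ and $\gamma \leq \alpha$, and is determined by a subrepresentation $W \subseteq M$ with $\dimvect W = \gamma$ satisfying the following framing compatibility: if $d_0 = 1$ then each framing arrow $0 \to i$ must land in $W_i$, forcing $\im A_i \subseteq W_i$ for every $i \in Q_0$, i.e.\ $\im A \subseteq W$; if $d_0 = 0$ the framing maps restrict to zero and no extra constraint is imposed on $W$.

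The stability weight of such a subrepresentation is
\[
c^\alpha(d_0,\gamma) \,=\, d_0\,|\alpha| - |\gamma|.
\]
For a proper non-zero subrepresentation, either $d_0 = 0$ with $\gamma \neq 0$, in which case the weight $-|\gamma|$ is strictly negative; or $d_0 = 1$ with $W \subsetneq M$, in which case the weight $|\alpha| - |\gamma|$ is at least $1$. Hence $(M,A)$ is $c^\alpha$-semi-stable iff no subrepresentation of the second kind exists, i.e.\ iff no proper subrepresentation of $M$ contains $\im A$, which is (c). Moreover, weight $0$ on a proper non-zero subrepresentation is impossible, so semi-stability automatically upgrades to stability, giving (a)$\Leftrightarrow$(b)$\Leftrightarrow$(c).

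The equivalence (c)$\Leftrightarrow$(d) follows from the formula $\phi_{M,A}(p \otimes w) = M_p A_{s(p)}(w)$, which exhibits $\im \phi_{M,A}$ as the subrepresentation of $M$ generated by $\bigoplus_i \im A_i$: taking $p = \epsilon_i$ shows $\im A \subseteq \im \phi_{M,A}$, while any subrepresentation of $M$ containing $\im A$ is closed under all $M_p$ and hence contains $\im \phi_{M,A}$. Thus $\phi_{M,A}$ is surjective exactly when $M$ itself is the only subrepresentation of $M$ containing $\im A$.

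Part (2) is proved by dualization. A subrepresentation of $(M,B)$ has dimension vector $(\gamma,d_\infty)$ with $d_\infty \in \{0,1\}$, corresponds to a subrepresentation $W \subseteq M$, and must satisfy $W \subseteq \ker B$ whenever $d_\infty = 0$. Its weight is $c_\alpha(\gamma,d_\infty) = |\gamma| - d_\infty|\alpha|$, and the same numerical analysis yields (a)$\Leftrightarrow$(b)$\Leftrightarrow$(c). For (c)$\Leftrightarrow$(d), the formula $\psi_{M,B}(v) = \sum_{s(q)=i} q^* \otimes B_{t(q)} M_q(v)$ shows that $\ker\psi_{M,B}$ is the largest subrepresentation of $M$ contained in $\ker B$: a subrepresentation $W$ of $M$ lies in $\ker\psi_{M,B}$ iff $B_{t(q)} M_q(w) = 0$ for all $w \in W_i$ and all paths $q$ starting at $i$, which, because $W$ is already closed under the $M_q$, reduces to $W_i \subseteq \ker B_i$. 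Hence $\psi_{M,B}$ is injective iff no non-zero subrepresentation of $M$ lies in $\ker B$. No genuine obstacle is anticipated; the proof is essentially careful bookkeeping of the framing constraints on subrepresentations combined with the elementary sign inequalities above.
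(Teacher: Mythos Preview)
Your argument is correct and complete. The paper itself does not prove this lemma at all: it simply cites \cite{ER:09} for part (1) and says part (2) is analogous. Your direct analysis of the subrepresentations of the framed quiver---splitting according to whether the extra vertex carries the full line or zero, computing the weights $d_0|\alpha|-|\gamma|$ and $|\gamma|-d_\infty|\alpha|$, and observing that only one of the two types can destabilize and always does so strictly---is exactly the standard proof that underlies the citation. The identification of $\im\phi_{M,A}$ with the subrepresentation generated by $\im A$, and of $\ker\psi_{M,B}$ with the largest subrepresentation contained in $\ker B$, is likewise correct and makes the equivalence with (d) transparent. So you have supplied what the paper omits, by the expected route.
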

	
	\begin{proof}
		Part (1) of the lemma is shown in \cite{ER:09}. Part (2) can be proved analogously.
	\end{proof}

	We denote the (semi\nobreakdash-)stable locus with respect to $c^\alpha$ by $R(Q^\beta,(1,\alpha))^0$ and the (semi\nobreakdash-)\-stable locus with respect to $c_\alpha$ by $R(Q_\beta,(\alpha,1))^0$. So, concretely
	\begin{align*}
		R(Q^\beta,(1,\alpha))^0 &= \{(M,A) \mid \phi_{M,A}: P_\beta \to M \text{ is surjective}\} \\
		R(Q_\beta,(\alpha,1))^0 &= \{ (M,B) \mid \psi_{M,B}: M \to I_\beta \text{ is injective}\}.
	\end{align*}
	
	\begin{defn}
		The geometric quotients
		\begin{align*}
			\Hilb^{\beta,\alpha}(Q) &= R(Q^\beta,(1,\alpha))^0/G(\alpha) \\
			\Hilb_{\alpha,\beta}(Q) &= R(Q_\beta,(\alpha,1))^0/G(\alpha)
		\end{align*}
		are called \emph{non-commutative Hilbert schemes}.
	\end{defn}

	\begin{prop} \label{p:identification_NCHS_QGr}
		\leavevmode
		\begin{enumerate}
			\item The map $R(Q^\beta,(1,\alpha))^0 \to \Gr^\alpha(\smash{P_\beta})$ defined by $(M,A) \to \ker \phi_{M,A}$ is a well-defined algebraic map and descends to an isomorphism $\Phi: \Hilb^{\beta,\alpha}(Q) \to \Gr^\alpha(\smash{P_\beta})$.
			\item The map $R(Q_\beta,(\alpha,1))^0 \to \Gr_\alpha(\smash{I_\beta})$ defined by $(M,B) \to \im \psi_{M,B}$ is a well-defined algebraic map and descends to an isomorphism $\Psi: \Hilb_{\alpha,\beta}(Q) \to \Gr_\alpha(\smash{I_\beta})$.
		\end{enumerate}

	\end{prop}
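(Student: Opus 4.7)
The plan is to prove (1) in detail; statement (2) is entirely dual, obtained by replacing surjections $P_\beta \twoheadrightarrow M$ by injections $M \hookrightarrow I_\beta$ throughout. I would proceed in three stages: first, verify that $(M,A) \mapsto \ker\phi_{M,A}$ defines a well-defined, algebraic, $G(\alpha)$-invariant morphism $R(Q^\beta,(1,\alpha))^0 \to \Gr^\alpha(P_\beta)$; second, note that it descends through the geometric quotient to a morphism $\Phi$; third, exhibit an inverse by recognizing $R(Q^\beta,(1,\alpha))^0$ as a principal $G(\alpha)$-bundle over $\Gr^\alpha(P_\beta)$ via the frame bundle of the universal quotient.

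For the first stage, \Cref{l:engel_reineke} ensures that on $R(Q^\beta,(1,\alpha))^0$ the universal homomorphism $P_\beta \otimes \mathcal{O} \to \bigoplus_i V_i \otimes \mathcal{O}$ is surjective. Its kernel is therefore a subrepresentation of $P_\beta$ which is locally free of constant dimension vector $\dimvect P_\beta - \alpha$, and so determines a classifying morphism to $\Gr^\alpha(P_\beta)$. A short computation using $(g\cdot M)_p = g_{t(p)} M_p g_{s(p)}^{-1}$ for every path $p$ shows $\phi_{g\cdot M,\,gA} = g \circ \phi_{M,A}$, whence the kernels agree and the morphism is $G(\alpha)$-invariant; by the universal property of the geometric quotient it factors through $\Phi: \Hilb^{\beta,\alpha}(Q) \to \Gr^\alpha(P_\beta)$.

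For the third stage, let $\mathcal{U} \subset P_\beta \otimes \mathcal{O}_{\Gr^\alpha(P_\beta)}$ be the tautological subrepresentation and $\mathcal{Q}$ its quotient, so that each $\mathcal{Q}_i$ is a rank-$\alpha_i$ vector bundle and the collection carries a $Q$-representation structure. The frame bundle
\[
F := \prod_{i \in Q_0} \mathrm{Isom}_{\Gr^\alpha(P_\beta)}\bigl(V_i \otimes \mathcal{O},\, \mathcal{Q}_i\bigr)
\]
is a principal $G(\alpha)$-bundle over $\Gr^\alpha(P_\beta)$. There is a natural $G(\alpha)$-equivariant morphism $F \to R(Q^\beta,(1,\alpha))^0$ sending a trivialization $\tau = (\tau_i)$ to the pair $(M_\tau, A_\tau)$, where $M_\tau$ is the representation on $\bigoplus_i V_i$ obtained by transporting the structure of $\mathcal{Q}$ via $\tau$, and $A_\tau$ is read off from the composition $P_\beta \otimes \mathcal{O} \twoheadrightarrow \mathcal{Q} \xrightarrow{\tau^{-1}} \bigoplus_i V_i \otimes \mathcal{O}$. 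Taking $G(\alpha)$-quotients on both sides identifies $\Gr^\alpha(P_\beta)$ with $\Hilb^{\beta,\alpha}(Q)$, and this identification is inverse to $\Phi$.

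The main point requiring care is the scheme-theoretic identification $F \cong R(Q^\beta,(1,\alpha))^0$: on $k$-points this is tautological, since the datum of $(U,\tau)\in F$ is equivalent to the datum of $(M,A)\in R(Q^\beta,(1,\alpha))^0$ after naming bases of the quotient, but in families one must verify that the transported representation structure and framing depend regularly on $\tau$. This follows from the functoriality of the universal subbundle and quotient constructions on the Grassmannian. Part (2) is then entirely dual: one uses the tautological subrepresentation of $I_\beta \otimes \mathcal{O}_{\Gr_\alpha(I_\beta)}$ and its frame bundle, replaces $\phi$ by $\psi$, and invokes \Cref{l:engel_reineke}(2).
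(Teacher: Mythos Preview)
Your argument is correct. The route differs from the paper's in presentation rather than in substance. The paper proves (2) first and argues as follows: it cites \cite[Subsect.\ 2.3]{CFR:12} for the identification $\Gr_\alpha(I_\beta) \cong \Hom_Q^0(\alpha,I_\beta)/G(\alpha)$, where $\Hom_Q^0(\alpha,I_\beta)$ is the space of pairs $(M,f)$ with $M \in R(Q,\alpha)$ and $f: M \hookrightarrow I_\beta$ an injective homomorphism of representations; it then observes that the natural isomorphism $\Hom_A(M,I_\beta) \cong \bigoplus_i \Hom_k(V_i,E_i)$ makes $(M,B) \mapsto (M,\psi_{M,B})$ a $G(\alpha)$-equivariant isomorphism $R(Q_\beta,(\alpha,1)) \xrightarrow{\cong} \Hom_Q(\alpha,I_\beta)$ on the \emph{whole} representation variety, under which the open loci match. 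Thus the paper compares the total spaces before taking the quotient and outsources the identification of the Grassmannian with a quotient to the literature.

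Your approach instead builds the inverse by hand via the frame bundle of the universal quotient on $\Gr^\alpha(P_\beta)$. This is exactly the same object as the paper's $\Hom_Q^0(\alpha,P_\beta)$ (a point of your $F$ is a subrepresentation of $P_\beta$ together with a trivialisation of its quotient, i.e.\ a surjection $P_\beta \twoheadrightarrow M$ with $M$ on the fixed vector spaces), so the two arguments coincide once unwound. What you gain is self-containment: you do not need the external reference, and you make the universal-property reasoning explicit. What the paper gains is brevity: once one knows the cited fact, the proof is a single line identifying $\bigoplus_i \Hom_k(V_i,E_i)$ with $\Hom_A(M,I_\beta)$.
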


	\begin{proof}
		By \cite[Subsect.\ 2.3]{CFR:12}, we know that $\Gr_\alpha(I_\beta)$ is isomorphic to 
		\[
		\Hom_Q^0(\alpha,I_\beta)/G(\alpha),
		\] 
		where $\Hom_Q(\alpha,I_\beta)$ is the set of all $(M,B) \in R(Q,\alpha) \times \prod_{i \in Q_0} \Hom(V_i,I_{\beta,i})$ such that $B_{t(a)}M_a = I_{\beta,a}B_{s(a)}$ for all $a \in Q_1$ and $\smash{\Hom_Q^0(\alpha,I_\beta)}$ is the subset where all $B_i$ are injective. The action of $G(\alpha)$ is given by $g \cdot (M,B) = (g\cdot M, (g_iB_i)_i)$. The map
		\[
			R(Q_\beta,(\alpha,1)) = R(Q,\alpha) \times \prod_{i \in Q_0} \Hom(V_i,E_i) \to \Hom_Q(\alpha,I_\beta)
		\]
		given by $(M,B) \mapsto (M,\psi_{M,B})$ is a $G(\alpha)$-equivariant isomorphism of varieties. The inverse image of $\smash{\Hom_Q^0(\alpha,I_\beta)}$ is precisely $R(Q_\beta,(\alpha,1))^0$. This proves (2). The proof of (1) is analogous.
	\end{proof}
	
	We will observe in the following that the isomorphisms $\Phi$ and $\Psi$ are equivariant with respect to the groups $G(\beta)$. We will define the actions of the group $G(\beta)$ on non-commutative Hilbert schemes and quiver Grassmannians in the following.
	
	To explain how the group $G(\beta)$ acts on non-commutative Hilbert schemes, we introduce two slightly different framing constructions. 	
	Consider the quivers $Q^\triangledown$ and $Q_\triangledown$ given as follows. Let $\smash{Q^\triangledown_0} = Q_0 \times \{0,1\}$ and $\smash{Q^\triangledown_1} = \{ (a,1): (s(a),1) \to (t(a),1) \} \sqcup \{ (i,01) : (i,0) \to (i,1) \}$. In words, we add a copy of every vertex and add an arrow from the copy to the original vertex. On the other hand, let $Q_{\triangledown,0} = Q_0 \times \{1,\infty\}$ and $Q_{\triangledown,1} = \{ (a,1): (s(a),1) \to (t(a),1) \} \sqcup \{ (i,1\infty) : (i,1) \to (i,\infty) \}$. That means, we again make a copy of each vertex, but this time add an arrow from the original vertex to its copy.
	
	Consider the dimension vector $(\beta,\alpha)$ for $Q^\triangledown$; formally $(\beta,\alpha)_{(i,0)} = \beta_i$ and $(\beta,\alpha)_{(i,1)} = \alpha_i$. Then
	\[
		R(Q^\triangledown,(\beta,\alpha)) = R(Q,\alpha) \oplus \bigoplus_{i \in Q_0} \Hom_k(E_i,V_i) = R(Q^\beta,(1,\alpha))
	\]
	but $G(\beta,\alpha) = G(\beta) \times G(\alpha)$; an element $(h,g) \in G(\beta) \times G(\alpha)$ acts on $(M,A) \in R(Q^\triangledown,(\beta,\alpha))$ by
	\[
		(h,g) \cdot (M,A) = ((g_{t(a)}M_ag_{s(a)}^{-1})_a,(g_iA_ih_i^{-1})) =: (g\cdot M, gAh^{-1}).
	\]
	For $Q_\triangledown$ and the dimension vector $(\alpha,\beta)$ defined by $(\alpha,\beta)_{(i,1)} = \alpha_i$ and $(\alpha,\beta)_{(i,\infty)} = \beta_i$, we have
	\[
		R(Q_\triangledown,(\alpha,\beta)) = R(Q,\alpha) \oplus \bigoplus_{i \in Q_0} \Hom_k(V_i,E_i) = R(Q_\beta,(\alpha,1))
	\]
	and $(g,h) \in G(\alpha,\beta) = G(\alpha) \times G(\beta)$ acts on $(M,B)$ by
	\[
		(g,h) \cdot (M,B) = ((g_{t(a)}M_ag_{s(a)}^{-1})_a,(h_iB_ig_i^{-1})) =: (g\cdot M, hBg^{-1})
	\]
	
	Via the identification $R(Q^\beta,(1,\alpha)) = R(Q^\triangledown,(\beta,\alpha))$ and $R(Q_\beta,(\alpha,1)) = R(Q_\triangledown,(\alpha,\beta))$ we transport the actions of $G(\beta,\alpha)$ and $G(\alpha,\beta)$ to $R(\smash{Q^\beta},(1,\alpha))$ and $R(Q_\beta,(\alpha,1))$, respectively. With respect to these group actions, the open subsets $R(\smash{Q^\beta},(1,\alpha))^0$ and $R(Q_\beta,(\alpha,1))^0$ are invariant. In order to obtain a group action on the quotients, we consider the following general lemma.

	\begin{lem} \label{l:group_action}
		Let $\Gamma$ be a reductive algebraic group and let $G$ be a closed normal subgroup of $\Gamma$. Let $X$ be a variety equipped with an action of $\Gamma$ such that a categorical $G$-quotient $\pi: X \to Y$ exists. Then there exists an action of $H := \Gamma/G$ on $Y$ such that $\pi$ is equivariant with respect to $\Gamma \to H$.
	\end{lem}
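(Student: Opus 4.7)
The plan is to descend the action morphism $\sigma: \Gamma \times X \to X$ through the quotient, using the universal property of the categorical $G$-quotient twice: once to kill the $G$-action on $X$, and once to kill the $G$-action on $\Gamma$ by right translation. Normality of $G$ in $\Gamma$ is what ties the two invariance checks together.

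First I would introduce the composition $\mu := \pi \circ \sigma : \Gamma \times X \to Y$, $(\gamma,x) \mapsto \pi(\gamma\cdot x)$, and verify that $\mu$ is invariant under the action of $G\times G$ on $\Gamma\times X$ given by $(g_1,g_2)\cdot (\gamma,x) = (\gamma g_1^{-1}, g_2\cdot x)$. For the second factor, $\pi(\gamma\cdot g\cdot x) = \pi((\gamma g\gamma^{-1})\cdot\gamma\cdot x) = \pi(\gamma\cdot x)$, since $\gamma g\gamma^{-1}\in G$ by normality and $\pi$ is $G$-invariant. For the first factor, the analogous computation $\pi(\gamma g^{-1}\cdot x) = \pi((\gamma g^{-1}\gamma^{-1})\cdot\gamma\cdot x) = \pi(\gamma\cdot x)$ works symmetrically.

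Next, I would invoke that the quotient map $p : \Gamma \to H$ is a (universal) categorical $G$-quotient for right multiplication — in fact a principal $G$-bundle — and combine it with $\pi : X \to Y$ to obtain a categorical $(G\times G)$-quotient
\[
p \times \pi : \Gamma \times X \longrightarrow H \times Y.
\]
By the universal property of this quotient, $\mu$ factors uniquely through a morphism $a : H\times Y \to Y$ characterized by $a(\gamma G,\pi(x)) = \pi(\gamma\cdot x)$. This identity, read as $a\circ(p\times\pi) = \pi\circ\sigma$, is precisely the $(\Gamma\to H)$-equivariance of $\pi$.

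Finally, I would verify that $a$ is a group action. The identity and associativity axioms for $a$ can be checked after precomposing with $p\times\pi$ and $p\times p\times \pi$, where they reduce to the corresponding axioms for the $\Gamma$-action on $X$; uniqueness in the universal property of the categorical quotients then upgrades these pullback identities to identities on $H\times Y$ and $H\times H\times Y$. The main technical obstacle is the product property of categorical quotients needed for $p\times \pi$ to itself be a categorical $(G\times G)$-quotient; this is not automatic for arbitrary categorical quotients, but in the situations in which the lemma is applied in the paper $\pi$ is a good/geometric quotient and $p$ is a principal bundle, so the required product compatibility holds.
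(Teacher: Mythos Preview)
Your proof is correct and follows essentially the same route as the paper's: both descend $\pi\circ a_\Gamma$ through the categorical $(G\times G)$-quotient $\Gamma\times X \to H\times Y$ to obtain the $H$-action, and then verify the action axioms via uniqueness. You actually supply more detail than the paper does, including the explicit invariance check using normality and the caveat that $p\times\pi$ being a categorical $(G\times G)$-quotient is not automatic but holds in the good/geometric quotient setting where the lemma is applied.
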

	
	\begin{proof}
		This is well-known. We give the argument for completeness and to fix notation.
		Let $\varpi: \Gamma \to H$. The morphism $\varpi \times \pi: \Gamma \times X \to H \times Y$ is a categorical $G \times G$-quotient. As the composition
		\[
			\Gamma \times X \xto{}{a_\Gamma} X \xto{}{\pi} Y
		\]
		of the action map $a_\Gamma$ with $\pi$ is $G \times G$-invariant, we obtain a unique morphism $a_{H}: H \times Y \to Y$ such that
		\[\begin{tikzcd}
			{\Gamma \times X} &[1em] X \\
			{H \times Y} & Y
			\arrow["{\varpi \times \pi}"', from=1-1, to=2-1]
			\arrow["{a_\Gamma}", from=1-1, to=1-2]
			\arrow["\pi"', from=1-2, to=2-2]
			\arrow["{a_{H}}", dashed, from=2-1, to=2-2]
		\end{tikzcd}\]
		is commutative. The map $a_{H}$ is an action of $H$. For closed points $h = \gamma G \in H$ and $x \in X$ we have $h\cdot \pi(x) = \pi(\gamma x)$.	
	\end{proof}
	
	We apply this to $\Gamma = PG(\beta,\alpha)$ (or $\Gamma = PG(\alpha,\beta)$), $G = G(\alpha)$, $H = PG(\beta)$, and $X = \smash{R(Q^\beta,(1,\alpha))^0}$ (or $X = \smash{R(Q_\beta,(\alpha,1))^0}$). \Cref{l:group_action} provides us with actions of $PG(\beta)$ on $\smash{\Hilb^{\beta,\alpha}}(Q)$ and $\smash{\Hilb_{\alpha,\beta}}(Q)$.
	
	Now to the action on quiver Grassmannians.
	For $h \in G(\beta)$, we regard $h_i \in \GL(E_i)$ as an automorphism of $P(i) \otimes E_i$. Therefore, $h$ gives rise to an automorphism $\sigma(h)$ of $P_\beta$. The map $\sigma: G(\beta) \to \Aut_A(P_\beta)$ is a morphism of algebraic groups. This provides us with an action of $G(\beta)$ on $\Gr^\alpha(P_\beta)$. If $h = (t\cdot \id_{E_i})$ for $t \in k^\times$, then $\sigma(h)$ is the multiplication with $t$ which leaves every subrepresentation invariant. Thus the $G(\beta)$-action on $\Gr^\alpha(P_\beta)$ descends to an action of $PG(\beta)$.
	In the same vein, there is a morphism of algebraic groups $G(\beta) \to \Aut_A(I_\beta)$; this yields an action of $PG(\beta)$ on $\Gr_\alpha(I_\beta)$. 
	
	\begin{lem} \label{l:equivariance}
		The isomorphisms $\Phi: \Hilb^{\beta,\alpha}(Q) \to \Gr^\alpha(\smash{P_\beta})$ and $\Psi: \Hilb_{\alpha,\beta}(Q) \to \Gr_\alpha(\smash{I_\beta})$ are $PG(\beta)$-equivariant.
	\end{lem}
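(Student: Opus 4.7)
The proof will consist of two direct calculations on the underlying $G(\beta)$-equivariant maps
\[
R(Q^\beta,(1,\alpha))^0 \to \Gr^\alpha(P_\beta), \qquad R(Q_\beta,(\alpha,1))^0 \to \Gr_\alpha(I_\beta)
\]
of Proposition~\ref{p:identification_NCHS_QGr}. Since $\Phi$ and $\Psi$ are obtained by passing to the $G(\alpha)$-quotient, $PG(\beta)$-equivariance of $\Phi$ and $\Psi$ follows from the equivariance of these maps combined with the universal property used in Lemma~\ref{l:group_action}. So it suffices to check, for each $h \in G(\beta)$, that the maps $(M,A) \mapsto \ker\phi_{M,A}$ and $(M,B) \mapsto \im\psi_{M,B}$ intertwine the action $(M,A)\mapsto (M,Ah^{-1})$, resp.\ $(M,B)\mapsto (M,hB)$, with the action of $\sigma(h) \in \Aut_A(P_\beta)$, resp.\ $\Aut_A(I_\beta)$.

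For the first map, I would simply compute on the defining generators. For $p \in Q_*$ and $w \in E_{s(p)}$ the recipe for $\phi_{M,A}$ gives
\[
\phi_{M,Ah^{-1}}(p \otimes w) = M_p A_{s(p)} h_{s(p)}^{-1}(w) = \phi_{M,A}\bigl(p \otimes h_{s(p)}^{-1}(w)\bigr) = \bigl(\phi_{M,A} \circ \sigma(h)^{-1}\bigr)(p \otimes w),
\]
because $\sigma(h)$ acts on the summand $P(i) \otimes E_i$ of $P_\beta$ via $\id_{P(i)} \otimes h_i$. Hence $\phi_{M,Ah^{-1}} = \phi_{M,A} \circ \sigma(h)^{-1}$, and taking kernels yields $\ker\phi_{M,Ah^{-1}} = \sigma(h)\bigl(\ker\phi_{M,A}\bigr)$, which is exactly the required equivariance.

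For the second map, the same computation on $v \in V_i$ gives
\[
\psi_{M,hB}(v) = \sum_{\substack{q \in Q_*\\ s(q) = i}} q^* \otimes h_{t(q)} B_{t(q)} M_q(v) = \sigma(h)\Bigl(\sum_{\substack{q \in Q_*\\ s(q) = i}} q^* \otimes B_{t(q)} M_q(v)\Bigr) = \sigma(h)\bigl(\psi_{M,B}(v)\bigr),
\]
using that $\sigma(h)$ acts on the summand $I(i) \otimes E_i$ of $I_\beta$ by $\id_{I(i)} \otimes h_i$. Thus $\psi_{M,hB} = \sigma(h) \circ \psi_{M,B}$, and taking images yields $\im\psi_{M,hB} = \sigma(h)\bigl(\im\psi_{M,B}\bigr)$.

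There is no real obstacle: once the actions are unwound on both sides, equivariance reduces to how the matrices $A_i$ (resp.\ $B_i$) appear in the formulas for $\phi_{M,A}$ and $\psi_{M,B}$. The only thing to be careful about is the sign/side convention: the $G(\beta)$-action on the framing data is by right multiplication by $h^{-1}$ on $A$ and by left multiplication by $h$ on $B$, which is precisely what makes $\sigma(h)$ appear on the correct side of the composition and thus preserves (co)kernels.
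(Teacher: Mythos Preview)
Your proof is correct and follows essentially the same approach as the paper: both reduce equivariance to a direct computation of $\phi_{M,Ah^{-1}}$ and $\psi_{M,hB}$ on the generators $p\otimes w$ and $v$, respectively. If anything, you are slightly more explicit than the paper in spelling out the identities $\phi_{M,Ah^{-1}} = \phi_{M,A}\circ\sigma(h)^{-1}$ and $\psi_{M,hB} = \sigma(h)\circ\psi_{M,B}$, whereas the paper carries along the $G(\alpha)$-part $g$ in the calculation to make the passage to the quotient visible; both variants amount to the same argument.
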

	
	\begin{proof} 
		For $(M,A) \in R(Q^\triangledown,(\beta,\alpha))$ and $(h,g) \in G(\beta,\alpha)$, the homomorphism $\phi_{g\cdot M,gAh^{-1}}$ applied to $p \otimes w$ with $w \in E_{s(p)}$ is
		\begin{align*}
			\phi_{g\cdot M,gAh^{-1}}(p \otimes w) &= (g_{t(p)}M_pg_{s(p)}^{-1})(g_{s(p)}A_{s(p)}h_{s(p)}^{-1})(w) \\
				&= g_{t(p)}\phi_{M,Ah^{-1}}(w)
		\end{align*}
		which shows
		\[
			\ker \phi_{g\cdot M,gAh^{-1}} = \ker \phi_{M,Ah^{-1}}.
		\]
		This proves equivariance of $\Phi$.
		Similarly, for $(M,B) \in R(Q_\triangledown,(\alpha,\beta))$ and $(g,h) \in G(\alpha,\beta)$, the homomorphism $\psi_{g\cdot M,hBg^{-1}}$ maps $v_i \in V_i$ to
		\begin{align*}
			\psi_{g\cdot M,hBg^{-1}}(v) &= \sum_{\substack{q \in Q_*\\ s(q) = i}} q^* \otimes (h_{t(q)}B_{t(q)}g_{t(q)}^{-1})(g_{t(q)}M_qg_{s(q)}^{-1})(v) \\
				&= \psi_{M,hB}(g_i^{-1}v)
		\end{align*}
		and therefore
		\[
			\im \psi_{g\cdot M,hBg^{-1}} = \im \psi_{M,hB}.
		\]
		Equivariance of $\Psi$ is proved.
	\end{proof}
	
	We describe the induced group actions on the quiver Grassmannians for our two running examples. In these cases, $\beta$ is always $\smash{\alpha^\pm}$.
	
	\begin{ex} \label{e:ex1.3}
		Recall Examples \ref{e:ex1.1} and \ref{e:ex1.2}. We describe the group actions on $\Gr^\alpha(P^+)$ and $\Gr_\alpha(I^-)$. 
		
		\noindent
		For $\Gr^\alpha(P^+) \cong \Gr^n(k^m)$, we get $\smash{\alpha^+} = (1,\ldots,1,0)$ and we obtain
		\[
			R(Q^{\alpha^+},(1,\alpha)) = k^m \oplus (k^n)^m = R(Q^\triangledown,(\alpha^+,\alpha)).
		\]
		The open subset $R(Q^{\alpha^+},(1,\alpha))^0$ consists of all $((z_1,\ldots,z_m),(v_1,\ldots,v_m))$ such that all $z_i \neq 0$ and the matrix $(v_1\mid \ldots \mid v_m)$ has full rank.
		We get an induced action of $PG(\smash{\alpha^+}) = (k^\times)^m/\Delta$ on $\Gr(P^+)$. A group element $h$, represented by $(h_1,\ldots,h_m) \in (k^\times)^m$, acts on an element $Q \in \Gr^n(k^m)$, represented by a matrix $A = (v_1\mid \ldots \mid v_m) \in M_{n \times m}(k)$ of full rank as $Q\cdot h^{-1}$, which is represented by the matrix
		\[
			A\cdot \begin{pmatrix} h_1^{-1} & & \\ & \ddots & \\ & & h_m^{-1} \end{pmatrix}.
		\]
		
		\noindent
		Now to $\Gr_\alpha(I^-) \cong (\P^1)^m$. We have $\smash{\alpha^-} = (0,\ldots,0,n)$ and
		\[
			R(Q_{\alpha^-},(\alpha,1)) = (k^2)^m \oplus M_{n \times n}(k)  = R(Q_\triangledown,(\alpha,\alpha^-)).
		\]
		The set $R(Q_{\alpha^-},(\alpha,1))^0$ consists of all $((v_1,\ldots,v_m),A)$ where all $v_i \neq 0$ and $A$ is invertible.
		The induced action is as follows. An element $h$ of group $PG(\smash{\alpha^-}) = \PGL_n(k)$ acts on $(x_1,\ldots,x_m) \in (\P^{n-1})^m$ by $(hx_1,\ldots,hx_m)$.
	\end{ex}
	
	\begin{ex} \label{e:ex2.3}
		Recall Examples \ref{e:ex2.1} and \ref{e:ex2.2}. We describe the group actions on $\Gr^\alpha(P)$ and $\Gr_\alpha(I)$. Both are isomorphic to Schubert varieties of $\GL(V)/H$ and $\alpha^+ = \alpha^- = \alpha$, so the group which acts is $G(\alpha) = \prod_{i=1}^n \GL(V_i)$. On both varieties, the action of $PG(\alpha^\pm) = PG(\alpha)$ comes from the action of $G(\alpha) = \prod_{i=1}^n \GL(V_i) \sub \GL(V)$, which acts on $\GL(V)/H$ by left multiplication.

	\end{ex}

	\section{Descent of line bundles} \label{s:descent}

	Now let $\theta \in \Z^{Q_0}$ be a stability parameter for $Q$ such that $\theta(\alpha) = 0$ and let $\beta = \smash{\alpha^+}$.
	Pick a natural number $N > 0$. We define a stability parameter $\theta^+$ for $Q^\triangledown$ by
	\begin{align*}
		\theta^+_{(i,0)} &= \begin{cases} N & \text{if } \theta_i \geq 0 \\ 0 & \text{otherwise} \end{cases} &
		\theta^+_{(i,1)} &= \begin{cases} -N+\theta_i & \text{if } \theta_i \geq 0 \\ \theta_i & \text{otherwise.} \end{cases}
	\end{align*}
	We see that $\theta^+(\alpha^+,\alpha) = 0$. 
	We also define the stability condition $\theta^-$ for $Q_\triangledown$ by
	\begin{align*}
		\theta^-_{(i,1)} &= \begin{cases} N+\theta_i & \text{if } \theta_i \leq 0 \\ \theta_i & \text{otherwise.} \end{cases} &
		\theta^-_{(i,\infty)} &= \begin{cases} -N & \text{if } \theta_i \leq 0 \\ 0 & \text{otherwise} \end{cases} 
	\end{align*}	
	We analyze (semi\nobreakdash-)stability with respect to $\theta^\pm$.
	
	\begin{prop} \label{p:theta+-stability}
		Let $N \gg 0$. 
		\begin{enumerate}
			\item An element $(M,A) \in R(Q^\triangledown,(\alpha^+,\alpha))$ is $\theta^+$-(semi\nobreakdash-)stable if and only if $M$ is $\theta$-(semi\nobreakdash-)stable and $A_i$ is invertible for every $i \in Q_0^+$.
			\item An element $(M,B) \in R(Q_\triangledown,(\alpha,\alpha^-))$ is $\theta^-$-(semi\nobreakdash-)stable if and only if $M$ is $\theta$-(semi\nobreakdash-)stable and $B_i$ is invertible for every $i \in Q_0^-$.
		\end{enumerate}
	\end{prop}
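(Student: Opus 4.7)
My plan is to prove part (1) directly from the definition of (semi-)stability, and to deduce part (2) by the dual argument. The central object to understand is a subrepresentation of $(M,A) \in R(Q^\triangledown,(\alpha^+,\alpha))$: such a subrepresentation is a pair $(W,E')$ where $W$ is a subrepresentation of $M$ and $E' = (E'_j)$ satisfies $E'_j \subseteq E_j$ and $A_j(E'_j) \subseteq W_j$ for every $j$ (note that $E_j = 0$ whenever $j \notin Q_0^+$). A direct computation from the definition of $\theta^+$ yields
\[
\theta^+\bigl((W,E')\bigr) \;=\; N \sum_{j \in Q_0^+} \bigl(\dim E'_j - \dim W_j\bigr) + \theta(W).
\]

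For the direction $(\Leftarrow)$, assuming $M$ is $\theta$-(semi-)stable and each $A_j$ with $j \in Q_0^+$ is invertible, I would argue that $E'_j \subseteq A_j^{-1}(W_j)$ combined with the invertibility of $A_j$ forces $\dim E'_j \leq \dim W_j$; the displayed formula then bounds $\theta^+((W,E')) \leq \theta(W) \leq 0$, proving semi-stability. For stability I would split into cases: if $0 \neq W \subsetneq M$, strict negativity follows from $\theta(W) < 0$; if $W = M$, then $E' \subsetneq E$ forces $\sum_{j \in Q_0^+}(\dim E'_j - \dim W_j) < 0$; and the case $W = 0$ cannot yield a nonzero subrepresentation when $A$ is invertible, since it would require $E'_j \subseteq \ker A_j = 0$.

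For the direction $(\Rightarrow)$, I would test (semi-)stability against two carefully chosen families of subrepresentations. The subrepresentation $(0,\ker A)$ is valid because $A_j(\ker A_j) = 0$, and the formula gives the non-negative quantity $\theta^+((0,\ker A)) = N \sum_{j \in Q_0^+} \dim \ker A_j$. Semi-stability forces this to be $\leq 0$, hence $\ker A_j = 0$ for every $j \in Q_0^+$; since $\dim E_j = \dim V_j = \alpha_j$ for these $j$, $A_j$ is invertible. With invertibility in hand, I would then test against $(W,A^{-1}(W))$ for an arbitrary subrepresentation $W$ of $M$; here $A_j^{-1}(W_j)$ has dimension exactly $\dim W_j$, so the formula collapses to $\theta^+((W,A^{-1}(W))) = \theta(W)$, and semi-stability of $(M,A)$ gives $\theta(W) \leq 0$, proving $\theta$-semi-stability of $M$. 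The stable case is handled analogously.

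Part (2) follows by the dual argument: subrepresentations of $(M,B)$ are pairs $(W,E')$ with $B_j(W_j) \subseteq E'_j$, and the analogous computation reads
\[
\theta^-\bigl((W,E')\bigr) \;=\; \theta(W) + N \sum_{j \in Q_0^-}\bigl(\dim W_j - \dim E'_j\bigr).
\]
One tests against $(V,\im B)$ to force each $B_j$ with $j \in Q_0^-$ to be invertible, then against $(W, B(W))$ to recover $\theta$-(semi-)stability of $M$; the converse uses $\dim E'_j \geq \dim B_j(W_j) = \dim W_j$ under invertibility. No step is particularly delicate, and I expect the hypothesis $N \gg 0$ to be a mere safety margin, since $N$ enters only as a positive scaling of the framing contribution and the argument succeeds for every $N > 0$.
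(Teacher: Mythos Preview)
Your proof is correct and follows essentially the same line as the paper's: both compute $\theta^+$ on a subrepresentation as $N\sum_{j\in Q_0^+}(\dim E'_j - \dim W_j) + \theta(W)$, test the forward direction against $(0,\ker A)$ (the paper uses the equivalent $(e_i,0)$) and against $(W,A^{-1}(W))$, and use invertibility of $A$ plus $\theta$-(semi-)stability of $M$ for the converse. The one genuine difference is organizational: the paper first characterizes, for $N\gg 0$, exactly which dimension vectors have $\theta^+>0$ and then rules them out case by case, whereas you bound $\theta^+((W,E'))\le \theta(W)$ directly from $\dim E'_j \le \dim A_j^{-1}(W_j)=\dim W_j$. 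Your route has the pleasant side effect you noticed: it shows that the proposition already holds for every $N>0$, and that the hypothesis $N\gg 0$ in the paper is an artifact of the auxiliary sign dichotomy rather than of the statement itself (the paper does need $N\gg 0$ later, in the comparison with $c^\alpha$ and $c_\alpha$, but not here).
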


	\begin{proof}
		We prove (1); the proof of (2) is analogous.
		Let $(\beta',\alpha')$ be another dimension vector of $Q^\triangledown$. We compute
		\[
			\theta^+(\beta',\alpha') = N(\left|\beta'\right| - \left|\smash{{\alpha'}^+}\right|) + \theta(\alpha').
		\]
		If we choose $N$ sufficiently big then $\theta^+(\beta',\alpha') > 0$ if and only if either of the following hold:
		\begin{itemize}
			\item $\left|\beta'\right| > \left|\smash{{\alpha'}^+}\right|$
			\item $\left|\beta'\right| = \left|\smash{{\alpha'}^+}\right|$ and $\theta(\alpha') > 0$.
		\end{itemize}
		Moreover $\theta^+(\beta',\alpha') = 0$ if and only if $\left|\beta'\right| = \left|\smash{{\alpha'}^+}\right|$ and $\theta(\alpha') = 0$.
		
		\noindent
		Let us assume that $(M,A)$ is $\theta^+$-semi-stable. If $A_i$ were not injective for a vertex $i \in Q_\theta$ then $(M,A)$ would have a subrepresentation of dimension vector $(e_i,0)$; here $e_i$ is the simple root at $i$. But $\theta^+(e_i,0) = N > 0$. Now to the $\theta$-semi-stability of $M$. Let $\alpha'$ be a dimension vector such that $\theta(\alpha') > 0$ and suppose that $M$ had a subrepresentation $(U_i)_{i \in Q_0}$ of dimension vector $\alpha'$. Then 
		\[
			((A_i^{-1}(U_i))_{i \in Q_\theta},(U_i)_{i \in Q_0})
		\]
		would be a subrepresentation of $(M,A)$ of dimension vector $(\smash{{\alpha'}^+},\alpha')$. But $\theta^+(\smash{{\alpha'}^+},\alpha') = \theta(\alpha') > 0$. The same argument shows that for a $\theta^+$-stable representation $(M,A)$, it is necessary that $M$ is $\theta$-stable.
		
		\noindent
		Conversely, assume that $M$ is $\theta$-semi-stable and all $A_i$ are invertible. Let $(\beta',\alpha')$ be a dimension vector with $\theta^+(\beta',\alpha') > 0$. We distinguish the above two cases:
		\begin{itemize}
			\item If $\left|\beta'\right| > \left|\smash{{\alpha'}^+}\right|$ then there must be a vertex $i \in Q_0^+$ such that $\beta'_i > \alpha'_i$. If $(M,A)$ had a subrepresentation of dimension vector $(\beta',\alpha')$, then $A_i$ would map a $\beta'_i$-dimensional subspace into an $\alpha'_i$-dimensional subspace. This contradicts injectivity of $A_i$.
			\item If $\left|\beta'\right| = \left|\smash{{\alpha'}^+}\right|$ and $\theta(\alpha') > 0$ then by $\theta$-semi-stability, $M$ has no subrepresentation of dimension vector $\alpha'$. Therefore $(M,A)$ cannot have a subrepresentation of dimension vector $(\beta',\alpha')$.
		\end{itemize}
		This shows that $(M,A)$ is $\theta^+$-semi-stable. 
		
		\noindent
		Finally, let us assume that $M$ is $\theta$-stable and all $A_i$ are invertible. If $(\beta',\alpha')$ is a dimension vector such that $\theta^+(\beta',\alpha') = 0$, then $\left|\beta'\right| = \left|\smash{{\alpha'}^+}\right|$ and $\theta(\alpha') = 0$ must hold. Assume that $(M,A)$ had a subrepresentation of dimension vector $(\beta',\alpha')$. Then $M$ has a subrepresentation of dimension vector $\alpha'$, so by $\theta$-stability, $\alpha' \in \{0,\alpha\}$. If $\alpha' = 0$ then $\beta' = 0$, while if $\alpha'=\alpha$ then $\beta' = \alpha^+$. This proves $\theta^+$-stability of $(M,A)$.
	\end{proof}

	We consider the quotient maps
	\begin{align*}
		\pi^+: R(Q^{\alpha^+},(1,\alpha))^0 &\to \Hilb^{\alpha^+,\alpha}(Q) & 
		\pi^-: R(Q_{\alpha^-},(\alpha,1))^0 &\to \Hilb_{\alpha,\alpha^-}(Q)
	\end{align*}
	by $G(\alpha)$ and the actions of $PG(\alpha^\pm)$ on the non-commutative Hilbert schemes provided by \Cref{l:group_action}.
	
	Before we proceed we recall some general facts on descent of equivariant vector bundles. Let $X$ be a quasi-projective variety equipped with an action of a reductive algebraic group $G$. We suppose that a good $G$-quotient $\pi: X \to Y$ exists. Let $E$ be a $G$-equivariant vector bundle $E$ on $X$. A descent of $E$ to $Y$ is a vector bundle $F$ on $Y$ together with an isomorphism $\pi^*F \to E$ of $G$-equivariant vector bundles. Note that, if it exists, a descent is unique. To see this, we follow an argument laid out by Knop in an answer on mathoverflow: As $\pi$ is a good quotient, the natural map $\OO_Y \to \pi_*\pi^*\OO_Y = \pi_*\OO_X$ factors through an isomorphism 
	$\smash{\OO_Y \xto{}{\cong} (\pi_*\OO_X)^G}$. 
	By local triviality of $F$, we get $\smash{F \xto{}{\cong} (\pi_*\pi^*F)^G}$. As the isomorphism $\pi^*F \to E$ is $G$-equivariant, it yields an isomorphism $(\pi_*\pi^*F)^G \to (\pi_*E)^G$. The composition gives an isomorphism 
	\[
		F \xto{}{\cong} (\pi_*E)^G.
	\]

	\begin{prop} \label{p:descent}
		Let $\Gamma$ be a reductive algebraic group, $G \sub \Gamma$ a closed normal subgroup, $H := \Gamma/G$, and $X$ a variety with a $\Gamma$-action such that a good $G$-quotient $\pi: X \to Y$ exists. Let $E$ be a $\Gamma$-equivariant vector bundle on $X$ such that for every closed point $x \in X$ which has a closed $G$-orbit, the stabilizer $G_x$ acts trivially on the fiber $E_x$. Then $E$ descends to an $H$-equivariant vector bundle on $Y$.
	\end{prop}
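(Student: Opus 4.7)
The plan is to define $F := (\pi_*E)^G$ and to prove (i) that $F$ is a vector bundle on $Y$ and (ii) that the natural adjunction map $\pi^*F \to E$ is an isomorphism of $\Gamma$-equivariant sheaves; the $H$-equivariance of $F$ will then come for free. Indeed, since $E$ is $\Gamma$-equivariant and $\pi$ is $\Gamma$-equivariant with $\Gamma$ acting on $Y$ through $\varpi: \Gamma \to H$ (\Cref{l:group_action}), the pushforward $\pi_*E$ carries a $\Gamma$-action by $\OO_Y$-linear endomorphisms of the underlying sheaf; the subgroup $G \subset \Gamma$ acts $\OO_Y$-linearly because it acts trivially on $Y$. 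Hence the invariant subsheaf $F$ inherits a residual $\OO_Y$-linear $\Gamma$-action which factors through $H$.

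The essential content is then (i) together with the isomorphism in (ii), which is exactly the statement of Kempf's descent lemma. The question is local on $Y$: since $\pi$ is a good quotient, $Y$ is covered by affine opens $U = \Spec A$ with preimages $\pi^{-1}(U) = \Spec B$ satisfying $A = B^G$. After this reduction $E$ corresponds to a $G$-equivariant finitely generated projective $B$-module $N$, and it suffices to show that $N^G$ is finitely generated projective over $A$ and that the multiplication map $B \otimes_A N^G \to N$ is an isomorphism.

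I would verify this using Luna's \'{e}tale slice theorem. For a closed point $x \in X$ whose $G$-orbit is closed, the stabilizer $G_x$ is reductive and Luna produces a $G_x$-stable locally closed slice $S \ni x$ together with a $G$-equivariant \'{e}tale morphism $G \times^{G_x} S \to X$ onto a $G$-saturated open neighborhood of $G\cdot x$; on quotients this gives an \'{e}tale morphism $S /\!\!/ G_x \to Y$ around $\pi(x)$. The pullback of $E$ to $S$ becomes a $G_x$-equivariant vector bundle whose fiber over $x$ is, by the hypothesis of the proposition, the trivial $G_x$-representation. A rigidity argument, using reductivity of $G_x$ to decompose the bundle into isotypic components over an \'{e}tale neighborhood of $x$, then shows that after possibly shrinking $S$ the $G_x$-action on this bundle is trivial, so the bundle descends to $S /\!\!/ G_x$; transferring back, this yields \'{e}tale-local vector bundle charts for $F$ and the isomorphism $\pi^*F \cong E$ required in (ii). The main technical obstacle I expect is precisely this rigidity step---upgrading triviality of the $G_x$-representation on a single fiber to triviality on an entire \'{e}tale neighborhood of $x$ in $S$---which is where the stabilizer hypothesis does its work; once this is in place, the $H$-equivariant structure on the descent is formal, as noted in the first paragraph.
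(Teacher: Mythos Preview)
Your approach is correct and lands on the same essential reduction as the paper: both invoke Kempf's descent criterion for the $G$-quotient, then separately supply the $H$-equivariant structure on the descended bundle. The differences are in emphasis and execution.

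For the $G$-descent itself, the paper simply cites Kempf's criterion as a black box (\cite[Thm.\ 2.3]{DN:89}), whereas you sketch a proof via Luna's slice theorem. Your sketch is fine but tacitly restricts to characteristic zero; since the paper works over an arbitrary algebraically closed field and treats Kempf's result as given, this is not a defect in your argument so much as an unnecessary detour. You could equally well just cite the result.

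The more interesting divergence is in how the $H$-linearization is produced. You define $F := (\pi_*E)^G$ from the outset and observe that the $\Gamma$-equivariant structure on $\pi_*E$ (with $G$ acting $\OO_Y$-linearly) induces a residual $H$-structure on the invariants; the $\Gamma$-equivariance of the counit $\pi^*F \to E$ is then formal naturality. The paper instead takes the descended bundle $F$ abstractly from Kempf, then manufactures the linearization $l_F: a_H^*F \to \pr_2^*F$ by hand: it writes down an isomorphism $\lambda$ on $\Gamma \times X$, checks via the cocycle condition that $\lambda$ is $G \times G$-equivariant, and descends it along $\varpi \times \pi$. Your route is cleaner and exploits the identification $F \cong (\pi_*E)^G$ that the paper in fact records just before the proposition; the paper's route is more explicit and makes the cocycle verification visible. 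Both buy the same conclusion.
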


	\begin{proof}
		The case where $G = \Gamma$ is Kempf's descent criterion, see \cite[Thm.\ 2.3]{DN:89}. Let $l: a_\Gamma^*E \to \pr_2^*E$ be the $\Gamma$-linearization of $E$; here $a_\Gamma: \Gamma \times X \to X$ is the action and $\pr_2: \Gamma \times X \to X$ is the projection. We obtain a $G$-linearization $l|_G$ which fulfills Kempf's criterion. This yields the existence of a vector bundle $F$ on $Y$ with an isomorphism $\eta: \pi^*F \to E$ of $G$-equivariant vector bundles. Consider the good $G \times G$-quotient $\varpi \times \pi: \Gamma \times X \to H \times Y$ and the isomorphism $\lambda$ such that the following diagram is commutative:
		\[\begin{tikzcd}
			{a_\Gamma^*\pi^*F} &[-1em] {(\varpi \times \pi)^*a_H^*F} & {(\varpi \times \pi)^*\pr_2^*F} &[-1em] {\pr_2^*\pi^*F} \\
			{a_\Gamma^*E} &&& {\pr_2^*E}
			\arrow["{a_\Gamma^*\eta}", from=1-1, to=2-1]
			\arrow[equals, from=1-1, to=1-2]
			\arrow["\lambda", from=1-2, to=1-3]
			\arrow[equals, from=1-3, to=1-4]
			\arrow["{\pr_2^*\eta}", from=1-4, to=2-4]
			\arrow["l", from=2-1, to=2-4]
		\end{tikzcd}\]
		The isomorphism $\lambda$ is $G \times G$-equivariant. To see this, we can apply $G$-equivariance of $\eta$ and the cocycle condition which reduces the claim to showing that the diagram of isomorphisms of vector bundles on $G \times \Gamma \times X$
		\[\begin{tikzcd}
			{(\gamma x)^*\pi^*F} &[-1em] {(\gamma gx)^*\pi^*F} &[1em] {(\gamma gx)^*E} &[-1em] {(\gamma g,x)^*a_\Gamma^*E} \\
			{(\gamma x)^*E} & {(\gamma,x)^*a_\Gamma^*E} & {(\gamma,x)^*\pr_2^*E} & {(\gamma g,x)^*\pr_2^*E}
			\arrow["{(\gamma x)^*\eta}", from=1-1, to=2-1]
			\arrow[equals, from=1-1, to=1-2]
			\arrow["{(\gamma gx)^*\eta}", from=1-2, to=1-3]
			\arrow[equals, from=1-3, to=1-4]
			\arrow[equals, from=2-1, to=2-2]
			\arrow["{(\gamma,x)^*l}", from=2-2, to=2-3]
			\arrow[equals, from=2-3, to=2-4]
			\arrow["{(\gamma g,x)^*l}", from=1-4, to=2-4]
		\end{tikzcd}\]
		commutes. Here, we have used the following short-hand notations: for the morphism $G \times \Gamma \times X \to \Gamma \times X$ which sends a closed point $(g,\gamma,x)$ to $(\gamma g,x)$ we write $(\gamma g,x)$. The other maps are defined analogously. For $g \in G$ and $\gamma \in \Gamma$ define $g^\gamma := \gamma g\gamma^{-1} \in G$. Then $\gamma g = g^\gamma\gamma$. By the cocycle condition
		\begin{align*}
			(\gamma g,x)^*l = (g^\gamma,\gamma,x)^*(m \times \id)^*l = (g^\gamma,\gamma,x)^*(\pr_{23}^*l \circ (\id \times a_\Gamma)^* l) = (\gamma,x)^*l \circ (g^\gamma,\gamma x)^*l.
		\end{align*}
		Thus, using $G$-equivariance of $\eta$
		\begin{align*}
			(\gamma g,x)^*l \circ (\gamma gx)^*\eta &= (\gamma,x)^*l \circ (g^\gamma,\gamma x)^*l \circ (g^\gamma\gamma x)^*\eta = (\gamma,x)^*l \circ (g^\gamma,\gamma x)^*(l|_G \circ a_G^*\eta) \\
			&= (\gamma,x)^*l \circ (g^\gamma,\gamma x)^*\pr_2^*\eta = (\gamma,x)^*l \circ (\gamma x)^*\eta.
		\end{align*}
		This shows equivariance of $\lambda$. We consider $l_F: a_H^*F \xto{}{\cong} \pr_2^*F$ given by
		\[
			a_H^*F = ((\varpi \times \pi)_*(\varpi \times \pi)^*a_H^*F)^{G \times G} \xto{}{((\varpi \times \pi)_*\lambda)^G} ((\varpi \times \pi)_*(\varpi \times \pi)^*\pr_2^*F)^{G \times G} = \pr_2^*F
		\]
		This can be shown to be an $H$-linearization of $F$, i.e.\ satisfies the cocycle condition, and it makes $\eta: \pi^*F \to E$ a $\Gamma$-equivariant isomorphism.
	\end{proof}

	We apply \Cref{p:descent} to the bundles $L(\theta^+)$ on $R(Q^{\alpha^+},(1,\alpha))^0$ and $L(\theta^-)$ on $R(Q_{\alpha^-},(\alpha,1))^0$.

	\begin{cor}
		\leavevmode
		\begin{enumerate}
			\item There exists a unique $\smash{PG({\alpha^+})}$-equivariant line bundle $\mathcal{L}(\theta^+)$ on $\smash{\Hilb^{\alpha^+,\alpha}}(Q)$ such that $(\pi^+)^*\mathcal{L}(\theta^+) \cong L(\theta^+)$ as $\smash{PG(\alpha^+},\alpha)$-equivariant line bundles. 
			\item There exists a unique $\smash{PG({\alpha^-})}$-equivariant line bundle $\mathcal{L}(\theta^-)$ on $\smash{\Hilb_{\alpha,\alpha^-}}(Q)$ such that $(\pi^-)^*\mathcal{L}(\theta^-) \cong L(\theta^-)$ as $\smash{PG(\alpha,\alpha^-)}$-equivariant line bundles. 
		\end{enumerate}
	\end{cor}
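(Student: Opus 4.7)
The plan is to apply \Cref{p:descent} twice, once with $\Gamma = PG(\alpha^+,\alpha)$, $G = G(\alpha)$, $H = PG(\alpha^+)$, and once with $\Gamma = PG(\alpha,\alpha^-)$, $G = G(\alpha)$, $H = PG(\alpha^-)$. Uniqueness in both cases then follows at once from the uniqueness paragraph immediately preceding \Cref{p:descent}, which gives the canonical identification $\mathcal{L}(\theta^\pm) \cong (\pi^\pm_*L(\theta^\pm))^{G(\alpha)}$.

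For existence in case (1), I would first argue that $L(\theta^+)$ is naturally a $\Gamma$-equivariant line bundle on $R(Q^{\alpha^+},(1,\alpha))^0$. This is immediate from the definition: $\theta^+$ is a stability parameter with $\theta^+(\alpha^+,\alpha) = 0$, so the character $\chi_{\theta^+}$ of $G(\alpha^+,\alpha)$ descends to $\bar{\chi}_{\theta^+}$ on $PG(\alpha^+,\alpha) = \Gamma$, and $L(\theta^+)$ is defined to be the trivial line bundle with the linearization induced by $\bar{\chi}_{\theta^+}$. Restricting to the open $\Gamma$-invariant subset $R(Q^{\alpha^+},(1,\alpha))^0$ keeps everything equivariant.

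Next, I would verify the hypothesis of \Cref{p:descent} concerning stabilizers. By \Cref{l:engel_reineke}(1), every closed point of $R(Q^{\alpha^+},(1,\alpha))^0$ is $c^\alpha$-stable, so the $PG(1,\alpha) = G(\alpha)$-action there is set-theoretically free (stability forces the stabilizer in $PG$ to be trivial, and here the subgroup $G = G(\alpha)$ of $\Gamma$ is exactly $PG(1,\alpha)$ acting in the standard way). Hence for every closed point $x$ the orbit $Gx$ is closed and the stabilizer $G_x$ is trivial, so it automatically acts trivially on the fiber $L(\theta^+)_x$. Thus \Cref{p:descent} applies and delivers an $H = PG(\alpha^+)$-equivariant line bundle $\mathcal{L}(\theta^+)$ on $\Hilb^{\alpha^+,\alpha}(Q)$ together with an isomorphism $(\pi^+)^*\mathcal{L}(\theta^+) \cong L(\theta^+)$ of $\Gamma$-equivariant line bundles.

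Part (2) is proved by the dual argument, using \Cref{l:engel_reineke}(2) in place of (1) to see that all points of $R(Q_{\alpha^-},(\alpha,1))^0$ are $c_\alpha$-stable, so the $G(\alpha)$-action there is free, and then invoking \Cref{p:descent} with $\Gamma = PG(\alpha,\alpha^-)$, $G = G(\alpha)$, $H = PG(\alpha^-)$, and the $\Gamma$-equivariant line bundle $L(\theta^-)$. No step of the argument presents a real obstacle: the only point one has to be careful about is checking that the copy of $G(\alpha)$ one quotients by is precisely the normal subgroup of $\Gamma$ whose quotient is $PG(\alpha^\pm)$, but this is exactly the identification already used in the application of \Cref{l:group_action} following its proof.
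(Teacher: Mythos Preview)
Your proof is correct and follows exactly the approach indicated in the paper: the corollary is obtained by applying \Cref{p:descent} to $L(\theta^\pm)$, and you have carefully verified the hypotheses (trivial $G(\alpha)$-stabilizers via \Cref{l:engel_reineke}) and the uniqueness clause that the paper leaves implicit.
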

	
	Next, we are concerned with ampleness of the descended line bundles on geometric quotients. To this end, recall the definition of (semi\nobreakdash-)stability on a quasi-projective variety with respect to a not necessarily ample line bundle:
		
	\begin{defn}
		Let $X$ be a quasi-projective variety equipped with an action of a reductive algebraic group $G$ and let $L$ be a $G$-equivariant line bundle. Let $x \in X$.
		\begin{enumerate}
			\item Call $x$ \emph{semi-stable} with respect to $L$ if there exists $n > 0$ and $s \in H^0(X,L^{\otimes n})^G$ such that $s(x) \neq 0$ and $X_s$ is affine.
			\item Call $x$ \emph{stable} with respect to $L$ if there exists $n > 0$ and $s \in H^0(X,L^{\otimes n})^G$ such that $s(x) \neq 0$, $X_s$ is affine, and the action of $G$ on $X_s$ is closed.
		\end{enumerate}
	\end{defn}
	
	In the above definition, $X_s = \{x \in X \mid s(x) \neq 0 \}$ and an action is called closed if all orbits are closed.

	\begin{lem} \label{l:general_ample}
		Let $X$ be a quasi-projective variety with an action of a reductive algebraic group $G$. Let $L$ be a $G$-equivariant line bundle and let $Y := X^{L\hypsst}/\!\!/G$. Let $L'$ be another $G$-equivariant line bundle such that
		\[
			X^{L'\hypsst} = X^{L\hypsst}.
		\]
		If $L'$ descends to a line bundle $\mathcal{L}'$ on $Y$ then $\mathcal{L}'$ is ample.
	\end{lem}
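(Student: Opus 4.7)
The plan is to verify ampleness of $\mathcal{L}'$ by producing, for every closed point $y \in Y$, a global section of some power of $\mathcal{L}'$ that is nonzero at $y$ and whose non-vanishing locus is an affine open neighborhood of $y$; this is a standard characterization of ample line bundles on noetherian schemes (cf.\ \cite[Thm.\ 7.6]{Hartshorne:77}, or equivalently EGA II).

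First I would unwind what descent gives: an isomorphism $\pi^*\mathcal{L}' \cong L'$ of $G$-equivariant bundles on $X^{L\hypsst}$ identifies $H^0(Y,\mathcal{L}'^{\otimes n})$ with $H^0(X^{L\hypsst},L'^{\otimes n})^G$ via pullback, and for any $G$-invariant section $s$ of $L'^{\otimes n}$ with descended section $\bar{s}$, one has $\pi^{-1}(Y_{\bar{s}}) = X^{L\hypsst}_s$ and $Y_{\bar{s}} = \pi(X^{L\hypsst}_s)$. Now fix $y \in Y$ and let $x \in X^{L\hypsst}$ lie in the closed $G$-orbit $\pi^{-1}(y)$. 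By hypothesis $x \in X^{L'\hypsst}$, so by definition of $L'$-semi-stability there exists $n > 0$ and a section $s \in H^0(X,L'^{\otimes n})^G$ with $s(x) \neq 0$ and $X_s$ affine. Replacing $s$ by its restriction to $X^{L\hypsst}$ (still $G$-invariant and nonzero at $x$), the open set $X^{L\hypsst}_s$ is affine as an open subset of the affine $X_s$ cut out by the nonvanishing locus of a regular function.

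Let $\bar{s} \in H^0(Y,\mathcal{L}'^{\otimes n})$ be the descended section. Then $\bar{s}(y) \neq 0$ because $(\pi^*\bar{s})(x) = s(x) \neq 0$. Moreover the restriction of the good quotient $\pi$ to $X^{L\hypsst}_s \to Y_{\bar{s}}$ remains a good $G$-quotient (good quotients restrict well to saturated opens, and $X^{L\hypsst}_s$ is $\pi$-saturated as the preimage of $Y_{\bar{s}}$). Since $G$ is reductive and $X^{L\hypsst}_s$ is affine, the good quotient $X^{L\hypsst}_s /\!\!/ G$ is affine, and it equals $Y_{\bar{s}}$. Thus $Y_{\bar{s}}$ is an affine open neighborhood of $y$ on which a power of $\mathcal{L}'$ is trivialized by a global section.

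Ranging over all $y \in Y$, the sets $Y_{\bar{s}}$ obtained in this way form an open cover of $Y$ by affine opens of the form $Y_{\bar{s}}$ with $\bar{s} \in H^0(Y,\mathcal{L}'^{\otimes n})$ for varying $n$. By the criterion cited above this shows $\mathcal{L}'$ is ample. The only real step that needs care is the identification $Y_{\bar{s}} = X^{L\hypsst}_s/\!\!/G$ together with the affineness of this quotient; both are consequences of standard properties of good quotients by reductive groups applied to saturated affine opens, so I expect no serious obstacle.
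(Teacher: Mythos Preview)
Your approach is essentially the same as the paper's, and the argument is correct except for one step that needs a different justification. You claim that $X^{L\hypsst}_s$ is affine ``as an open subset of the affine $X_s$ cut out by the nonvanishing locus of a regular function.'' There is no reason for $X^{L\hypsst}\cap X_s$ to be a distinguished open of $X_s$: the semi-stable locus $X^{L\hypsst}$ is in general a union of non-vanishing loci of sections of powers of $L$, not the non-vanishing locus of a single regular function on $X$.

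The correct (and simpler) observation, which the paper makes, is that $X_s \subseteq X^{L'\hypsst}$ by the very definition of $L'$-semi-stability (any point where the $G$-invariant section $s$ does not vanish, with $X_s$ affine, is $L'$-semi-stable). Since $X^{L'\hypsst}=X^{L\hypsst}$ by hypothesis, one gets $X_s \subseteq X^{L\hypsst}$, hence $X^{L\hypsst}_s = X_s$, which is affine. Then $\pi^{-1}(Y_{\bar s}) = X_s$ and the restricted good quotient $X_s \to Y_{\bar s}$ forces $Y_{\bar s}$ to be affine, exactly as you conclude. With this one-line fix your proof matches the paper's.
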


	\begin{proof}
		Abbreviate $X^0 = X^{L\hypsst}$. Let $\pi: X^0 \to Y$ be the quotient map. As $\pi$ is a good $G$-quotient, we have an isomorphism $\smash{\mathcal{L}' \xto{}{\cong} (\pi_*L')^G}$. An application of the global sections functor gives an isomorphism
		\[
			H^0(Y,\mathcal{L}') \xto{}{\cong} H^0(X^0,L')^G.
		\]
		The isomorphism maps a section $s$ to $\pi^*s$.
		The same holds for all powers of $\mathcal{L}'$. Let $y \in Y$ and let $x \in X^0$ such that $y = \pi(x)$. Then $x \in X^{L'\hypsst}$, therefore there exists $n > 0$ and $t \in H^0(X,{L'}^{\otimes n})$ such that $t(x) \neq 0$ and $X_t$ is affine. Let $s \in H^0(Y,{\mathcal{L}'}^{\otimes n})$ such that $\pi^*s = t|_{X^0}$. Then obviously $s(y) = s(\pi(x)) = t(x) \neq 0$. We also get
		\[
			\pi^{-1}(Y_s) = (X_0)_{t|_{X^0}} = X_t;
		\]
		the latter equality uses $X^{L'\hypsst} \sub X^0$. The restriction $\pi|_{X_t}: X_t \to Y_s$ is therefore also a good $G$-quotient. As $X_t$ is affine, say $X_t = \Spec A$, we get $Y_s = \Spec A^G$, also affine. We have proved ampleness of $\mathcal{L}'$.
	\end{proof}

	We would like to show ampleness of the line bundles $\mathcal{L}(\theta^\pm)$. We argue for $\mathcal{L}(\theta^+)$, the argument for $\mathcal{L}(\theta^-)$ is analogous. To apply \Cref{l:general_ample}, we need to look at the line bundle $L(\theta^+)$ as a $G(\alpha)$-equivariant line bundle. As a bundle, it is trivial, and the equivariant structure is given by the character
	\[
		\chi_{\theta^+}|_{G(\alpha)}: G(\alpha) \to k^\times.
	\]
	To determine the stability parameter of $Q^{\alpha^+}$ which corresponds to this character, we need to pre-compose it with $G(1,\alpha) \to PG(1,\alpha) \cong G(\alpha)$. This morphism is the horizontal arrow in the diagram
	\[\begin{tikzcd}
		{(t,g)} & {(t^{-1}g_i)_i} \\[-2em]
		{G(1,\alpha)} & {G(\alpha)} \\
		{PG(1,\alpha)}
		\arrow[from=2-1, to=2-2]
		\arrow[from=2-1, to=3-1]
		\arrow["{\cong}"', from=3-1, to=2-2]
		\arrow[maps to, from=1-1, to=1-2]
	\end{tikzcd}\]
	The resulting character $\chi: G(1,\alpha) \to PG(1,\alpha) \xto{}{\cong} G(\alpha) \xto{}{\chi_{\theta^+}} k^\times$ sends $(t,g)$ to
	\[
		\chi(t,g) = \chi_{\theta^+}((t^{-1}g_i)_i) = \prod_{i \in Q_0^+} \det(t^{-1}g_i)^{N-\theta_i} \prod_{i \notin Q_0^+} \det(t^{-1}g_i)^{-\theta_i} = t^{-N\left|\smash{\alpha^+}\right|} \cdot \chi_{\theta^+}(1,g)
	\]	
	This shows us that $\chi$ is the character $\chi_{\eta^+}$, where $\eta^+$ is the stability parameter for $Q^{\alpha^+}$ given by
	\begin{align*}
		\eta^+_0 &= N\left|\smash{\alpha^+}\right| &
		\eta^+_i &= \begin{cases} -N + \theta_i & \text{if } i \in Q_0^+ \\ \theta_i & \text{otherwise.} \end{cases}
	\end{align*}
	Similarly, the character $G(\alpha,1) \to PG(\alpha,1) \xto{}{\cong} G(\alpha) \xto{}{\chi_{\theta^-}} k^\times$ equals $\chi_{\eta^-}$, where	
	\begin{align*}
		\eta^-_\infty &= -N\left|\smash{\alpha^-}\right| &
		\eta^-_i &= \begin{cases} N + \theta_i & \text{if } i \in Q_0^- \\ \theta_i & \text{otherwise.} \end{cases}
	\end{align*}
	
	We compare (semi\nobreakdash-)stability with respect to $\eta^{\pm}$ with (semi\nobreakdash-)stability with respect to $c^\alpha$ and $c_\alpha$, respectively.
	
	\begin{lem} \label{l:framed_stability}
		Let $N \gg 0$. The following hold:
		\begin{enumerate}
			\item $R(Q^{\alpha^+},(1,\alpha))^{\eta^+\hypsst} = R(Q^{\alpha^+},(1,\alpha))^{\eta^+\hypst} = R(Q^{\alpha^+},(1,\alpha))^0$
			\item $R(Q_{\alpha^-},(\alpha,1))^{\eta^-\hypsst} = R(Q_{\alpha^-},(\alpha,1))^{\eta^-\hypst} = R(Q_{\alpha^-},(\alpha,1))^0$
		\end{enumerate}
	\end{lem}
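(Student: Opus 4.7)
The plan is to reduce (semi\nobreakdash-)stability with respect to $\eta^+$ to the Engel--Reineke criterion in \Cref{l:engel_reineke}(1). For a sub-dimension vector $(d,\alpha')$ of $(1,\alpha)$ with $d \in \{0,1\}$ and $\alpha' \leq \alpha$, a direct computation gives
\[
\eta^+(d,\alpha') = N\bigl(\left|\alpha^+\right| d - \left|{\alpha'}^+\right|\bigr) + \theta(\alpha').
\]
Since $\theta(\alpha)=0$, the whole representation has $\eta^+(1,\alpha)=0$. Sub-representations of $(M,A) \in R(Q^{\alpha^+},(1,\alpha))$ split into \emph{Type A} (with $d=0$), which are arbitrary sub-representations of $M$, and \emph{Type B} (with $d=1$), which are sub-representations $V' \subseteq M$ satisfying $\im(A_i) \subseteq V'_i$ for every $i \in Q_0$.

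For $N$ large enough (explicitly, $N > \sum_i |\theta_i|\alpha_i$), I would verify that every proper non-zero Type A sub satisfies $\eta^+(0,\alpha') < 0$ while every proper non-zero Type B sub satisfies $\eta^+(1,\alpha') > 0$. The Type A case splits according to whether $\left|{\alpha'}^+\right| \geq 1$ (the $-N\left|{\alpha'}^+\right|$ term dominates) or $\left|{\alpha'}^+\right| = 0$ (then $\alpha' \neq 0$ is supported on $Q_0 \setminus Q_0^+$, where $\theta_i < 0$, so $\theta(\alpha') < 0$). The Type B case is analogous: if $\left|{\alpha'}^+\right| < \left|\alpha^+\right|$ the $N$-term wins; if $\left|{\alpha'}^+\right| = \left|\alpha^+\right|$, then $\alpha'_i = \alpha_i$ for all $i \in Q_0^+$, and using $\theta(\alpha)=0$ one rewrites
\[
\eta^+(1,\alpha') = \theta(\alpha') = \sum_{i \notin Q_0^+} |\theta_i|\bigl(\alpha_i - \alpha'_i\bigr),
\]
which is positive because $\alpha' \neq \alpha$ forces $\alpha'_j < \alpha_j$ for some $j \notin Q_0^+$.

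Combining these sign analyses, $(M,A)$ is $\eta^+$-semi-stable if and only if it has no proper non-zero Type B sub-representation, if and only if (by \Cref{l:engel_reineke}(1), the equivalence (c)~$\Leftrightarrow$~(d)) $\phi_{M,A}$ is surjective, if and only if $(M,A) \in R(Q^{\alpha^+},(1,\alpha))^0$. Under these equivalent conditions the Type A sign constraints required for stability hold automatically, so $\eta^+$-semi-stability and $\eta^+$-stability coincide, proving (1). Part (2) is formally dual: interchange $Q_0^+ \leftrightarrow Q_0^-$, sub $\leftrightarrow$ quotient, $\phi \leftrightarrow \psi$, and invoke \Cref{l:engel_reineke}(2). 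No step is genuinely hard; the only care needed is to choose $N$ uniformly large enough so that this lemma \emph{and} \Cref{p:theta+-stability} both hold, which is possible since all required lower bounds depend only on $Q$, $\alpha$, and $\theta$.
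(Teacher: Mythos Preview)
Your proof is correct and follows essentially the same approach as the paper: both arguments split sub-dimension vectors of $(1,\alpha)$ into the cases $(0,\alpha')$ and $(1,\alpha')$, show that $\eta^+(0,\alpha')<0$ for all non-zero $\alpha'$ and $\eta^+(1,\alpha')>0$ for all $\alpha'\neq\alpha$ when $N\gg 0$ via the same sub-cases ($\left|{\alpha'}^+\right|$ strictly less than, or equal to, $\left|\alpha^+\right|$), and then invoke \Cref{l:engel_reineke}(1). Your Type~A/Type~B terminology and the explicit rewriting of $\theta(\alpha')$ in the equal-case are cosmetic differences only.
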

	
	\begin{proof}
		We prove the first assertion. The second is proved analogously.
		We analyze the values of sub-dimension vectors of $(1,\alpha)$. Let $\alpha'$ be a dimension vector for $Q$ such that $\alpha'_i \leq \alpha_i$ for every $i \in Q_0$. Assume that $\alpha' \neq \alpha$ and consider the dimension vector $(1,\alpha')$:
		\[
			\eta^+(1,\alpha') = N\cdot(\left|\smash{\alpha^+}\right|-\left|\smash{{\alpha'}^+}\right|) + \theta(\alpha')
		\]
		If $\left|\smash{\alpha^+}\right| > \left|\smash{{\alpha'}^+}\right|$ then $\eta^+(1,\alpha') > 0$ as $N$ is big. If $\left|\smash{\alpha^+}\right| = \left|\smash{{\alpha'}^+}\right|$, then there must exist $i \in Q_0 \setminus Q_0^+$ such that $\alpha'_i < \alpha_i$. This shows
		\[
			\theta(\alpha') = \underbrace{\theta({\alpha'}^+)}_{=\theta(\alpha^+)} + \underbrace{\theta(\alpha'-{\alpha'}^+)}_{> \theta(\alpha-\alpha^+)} > \theta(\alpha) = 0
		\]
		So dimension vectors of the form $(1,\alpha')$ with $\alpha' \neq \alpha$ always have a strictly positive value under $\eta^+$.
		Now to dimension vectors of the form $(0,\alpha')$ with $\alpha' \neq 0$. We compute
		\[
			\eta^+(0,\alpha') = -N\left|\smash{{\alpha'}^+}\right| + \theta(\alpha')
		\]
		If $\left|\smash{{\alpha'}^+}\right| > 0$ then $\eta^+(0,\alpha') < 0$ because $N \gg 0$. If $\left|\smash{{\alpha'}^+}\right| = 0$, then we obtain
		\[
		\theta(\alpha') = \theta(\alpha'-{\alpha'}^+) < 0
		\]
		as $\alpha' \neq 0$.
		We have asserted that $\eta^+(0,\alpha') < 0$ for every sub-dimension vector $\alpha' \neq 0$ of $\alpha$. This shows that a representation $(M,A)$ of dimension vector $(1,\alpha)$ is $\eta^+$-semi-stable if and only if it is $\eta^+$-stable if and only if there is no proper subrepresentation of $M$ which contains the image of $A$. By \Cref{l:engel_reineke}, this agrees with (semi\nobreakdash-)stability with respect to $c^\alpha$. 
	\end{proof}
	
	We conclude with Lemmas \ref{l:general_ample} and \ref{l:framed_stability}:	
	
	\begin{prop} \label{p:ample}
		The line bundles $\mathcal{L}(\theta^+)$ on $\smash{\Hilb^{\alpha^+,\alpha}}(Q)$ and $\mathcal{L}(\theta^-)$ on $\smash{\Hilb_{\alpha,\alpha^-}}(Q)$ are ample.
	\end{prop}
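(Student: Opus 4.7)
The plan is to apply Lemma \ref{l:general_ample} and Lemma \ref{l:framed_stability} directly. I will give the argument for $\mathcal{L}(\theta^+)$; the case of $\mathcal{L}(\theta^-)$ is entirely analogous, using the second parts of both lemmas.

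The preceding discussion has already done the essential book-keeping: viewing $L(\theta^+)$ as a $PG(1,\alpha)$-equivariant line bundle on $R(Q^{\alpha^+},(1,\alpha))$ via the restriction of the $G(\alpha^+,\alpha)$-action along the identification $PG(1,\alpha) \cong G(\alpha)$, the computation of the character just carried out shows that the associated character equals $\chi_{\eta^+}$. Hence as $PG(1,\alpha)$-equivariant line bundles we have $L(\theta^+) \cong L(\eta^+)$, and by the uniqueness of descent established in the paragraph before \Cref{p:descent} this line bundle descends along $\pi^+$ to $\mathcal{L}(\theta^+)$.

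I would then apply \Cref{l:general_ample} with $X = R(Q^{\alpha^+},(1,\alpha))$, $G = PG(1,\alpha)$, $L = L(c^\alpha)$ (so that $X^{L\hypsst} = R(Q^{\alpha^+},(1,\alpha))^0$ and $X^{L\hypsst}/\!\!/G = \Hilb^{\alpha^+,\alpha}(Q)$), and $L' = L(\eta^+)$. Part (1) of \Cref{l:framed_stability} supplies precisely the hypothesis $X^{L'\hypsst} = X^{L\hypsst}$ required by \Cref{l:general_ample}. Since $L' = L(\eta^+)$ descends to $\mathcal{L}(\theta^+)$, the conclusion of \Cref{l:general_ample} gives ampleness of $\mathcal{L}(\theta^+)$.

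I do not anticipate any serious obstacle here: the proposition is essentially a formal consequence of the two preceding lemmas, and the only substantive point is the identification of $L(\theta^+)$ with $L(\eta^+)$ as $PG(1,\alpha)$-equivariant line bundles, which has already been established by the character calculation immediately preceding the statement.
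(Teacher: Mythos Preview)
Your proposal is correct and follows exactly the paper's own approach: the proposition is stated as an immediate consequence of Lemmas~\ref{l:general_ample} and~\ref{l:framed_stability}, with the identification $L(\theta^+)\cong L(\eta^+)$ as $PG(1,\alpha)$-equivariant bundles already established by the character computation preceding the statement. Your choice of $X$, $G$, $L=L(c^\alpha)$, $L'=L(\eta^+)$ is precisely the intended instantiation.
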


	In the last part of the section, we should like to compare (semi\nobreakdash-)stability with respect to $\mathcal{L}(\theta^\pm)$ with (semi\nobreakdash-)stability with respect to $L(\theta^\pm)$ and relate the quotients. This can again be treated in greater generality.
	
	\begin{lem} \label{l:general_stability_descent}
		Let $X$ be a quasi-projective variety equipped with an action of a reductive group $\Gamma$. Let $G$ be a closed normal subgroup of $\Gamma$ and let $H := \Gamma/G$. Suppose that there exists a good quotient $\pi: X \to Y$ by $G$. Let $\mathcal{L}$ be an $H$-equivariant line bundle on $Y$ and let $L := \pi^*\mathcal{L}$ considered as a $\Gamma$-equivariant line bundle. Then:
		\begin{enumerate}
			\item We have $\pi(X^{L\hypsst}) = Y^{\mathcal{L}\hypsst}$ and $\pi$ induces an isomorphism
			\[
				X^{L\hypsst}/\!\!/\Gamma \xto{}{\cong} Y^{\mathcal{L}\hypsst}/\!\!/H.
			\]
			\item If $\pi$ is a geometric quotient by $G$ with finite stabilizers then $\pi(X^{L\hypst}) = Y^{\mathcal{L}\hypst}$ and the isomorphism from above restricts to an isomorphism
			\[
				X^{L\hypst}/\Gamma \xto{}{\cong} Y^{\mathcal{L}\hypst}/H.
			\]
		\end{enumerate}
	\end{lem}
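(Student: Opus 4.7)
The plan is to leverage the fact that $L = \pi^{*}\mathcal{L}$ in order to put invariant sections of $L$ and $\mathcal{L}$ into bijection and to transport the defining properties of (semi\nobreakdash-)stable points across $\pi$. Concretely, I would first show that
\[
	\pi^{*}: H^{0}(Y,\mathcal{L}^{\otimes n})^{H} \xrightarrow{\ \cong\ } H^{0}(X,L^{\otimes n})^{\Gamma}
\]
is an isomorphism for every $n \geq 0$. Since $\pi$ is a good $G$-quotient, the projection formula gives $\pi_{*}L^{\otimes n} \cong \pi_{*}\OO_{X} \otimes \mathcal{L}^{\otimes n}$, and taking $G$-invariants together with $(\pi_{*}\OO_{X})^{G} = \OO_{Y}$ yields $(\pi_{*}L^{\otimes n})^{G} = \mathcal{L}^{\otimes n}$. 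Passing to global sections and then to $H$-invariants proves the claim.

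Second, I would match the GIT open sets. For $t \in H^{0}(Y,\mathcal{L}^{\otimes n})^{H}$ with pullback $s = \pi^{*}t$, one has $X_{s} = \pi^{-1}(Y_{t})$. Since $\pi$ is affine (being a good quotient), $Y_{t}$ affine forces $X_{s}$ affine, and conversely if $X_{s}$ is affine, then the restriction $\pi|_{X_{s}}\colon X_{s} \to Y_{t}$ is still a good $G$-quotient, so $Y_{t} = \Spec k[X_{s}]^{G}$ is affine. Combined with the section correspondence, this shows $\pi^{-1}(Y^{\mathcal{L}\hypsst}) = X^{L\hypsst}$, which in particular gives $\pi(X^{L\hypsst}) = Y^{\mathcal{L}\hypsst}$.

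For the isomorphism of quotients in (1), I would consider the composition
\[
	X^{L\hypsst} \xrightarrow{\ \pi\ } Y^{\mathcal{L}\hypsst} \xrightarrow{\ q_{H}\ } Y^{\mathcal{L}\hypsst}/\!\!/H
\]
and argue that it is a good $\Gamma$-quotient. Affineness, surjectivity, and $\Gamma$-invariance are inherited from the two factors; the key structural properties are the identification $(q_{H} \circ \pi)_{*}\OO_{X^{L\hypsst}})^{\Gamma} = \OO_{Y^{\mathcal{L}\hypsst}/\!\!/H}$, which follows from taking $G$-invariants and then $H$-invariants in stages, and the separation of disjoint $\Gamma$-invariant closed sets, which follows because $G$-invariant saturations are $\Gamma$-invariant (as $G$ is normal) and both $\pi$ and $q_{H}$ are good quotients. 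By uniqueness of good quotients, the induced map $X^{L\hypsst}/\!\!/\Gamma \to Y^{\mathcal{L}\hypsst}/\!\!/H$ is an isomorphism.

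For (2), I would use that when $\pi$ is a geometric $G$-quotient with finite stabilizers, every $\Gamma$-orbit in $X$ satisfies $\Gamma \cdot x = \pi^{-1}(H \cdot \pi(x))$. Hence $\pi$ is continuous and closed, so closedness of the $\Gamma$-orbit $\Gamma \cdot x$ in $X_{s}$ is equivalent to closedness of the $H$-orbit $H \cdot \pi(x)$ in $Y_{t}$, and finiteness of $G$-stabilizers upgrades to finiteness of $\Gamma$-stabilizers on $X^{L\hypst}$. Combined with (1), this yields $\pi^{-1}(Y^{\mathcal{L}\hypst}) = X^{L\hypst}$, and restricting the isomorphism from (1) to the stable loci gives the desired geometric quotient isomorphism. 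The main obstacle is the verification that the composition of good quotients along a normal subgroup chain is itself a good $\Gamma$-quotient; all other steps are routine transfers along $\pi^{*}$ once that composition property is in hand.
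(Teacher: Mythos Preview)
Your proposal is correct and, for part~(2), proceeds along essentially the same lines as the paper (the orbit identity $\Gamma\cdot x = \pi^{-1}(H\cdot\pi(x))$ and the transfer of closedness). For part~(1), however, you take a genuinely different route: the paper simply observes that the section correspondence yields an isomorphism of graded rings
\[
\bigoplus_{n\geq 0} H^0(Y,\mathcal{L}^{\otimes n})^H \xrightarrow{\ \cong\ } \bigoplus_{n\geq 0} H^0(X,L^{\otimes n})^\Gamma
\]
and reads off the isomorphism $X^{L\hypsst}/\!\!/\Gamma \cong Y^{\mathcal{L}\hypsst}/\!\!/H$ by taking $\Proj$, whereas you verify the good-quotient axioms for the composite $q_H\circ\pi$ and invoke uniqueness of categorical quotients. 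Both arguments are valid; the $\Proj$ approach is shorter, while your quotient-in-stages verification makes the compatibility with $\pi$ more transparent and yields a reusable ``composition of good quotients along a normal subgroup'' statement.

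Two small points of precision. First, your phrase ``$\pi$ is closed'' is not literally correct for good (or even geometric) quotients; what you actually need, and what does hold, is that $\pi$ sends $G$-invariant closed subsets to closed subsets, which suffices since $\Gamma\cdot x$ is $G$-invariant by normality of $G$. Second, the paper's definition of stability only demands a closed action on $X_s$, not finite stabilizers, so your remark about ``upgrading finiteness of $G$-stabilizers to finiteness of $\Gamma$-stabilizers'' is unnecessary here; and as stated it is not quite right either, since finiteness of $\Gamma_x$ requires both $G_x$ and $H_{\pi(x)}$ finite (via the surjection $\Gamma_x \twoheadrightarrow H_{\pi(x)}$ with kernel $G_x$), not finiteness of $G_x$ alone.
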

	
	\begin{proof}
		We abbreviate in the proof $X^{\sst} = X^{L\hypsst}$ and $Y^{\sst} = Y^{\mathcal{L}\hypsst}$; we use the analogous short-hand notation for the stable loci. 
				
		(1) Like in the proof of \Cref{l:general_ample}, the fact that $\pi$ is a good $G$-quotient implies that pulling back sections along $\pi$ yields an isomorphism
		\[
			H^0(Y,\mathcal{L}^{\otimes n}) \xto{}{\cong} H^0(X,L^{\otimes n})^G.
		\]
		A section $s \in H^0(Y,\mathcal{L}^{\otimes n})$ is $H$-invariant if and only if $\pi^*s$ is $\Gamma$-invariant. Moreover, $X_{\pi^*s} = \pi^{-1}(Y_s)$, a $G$-invariant open subset of $X$. Therefore, the restriction $X_{\pi^*s} \to Y_s$ of $\pi$ is also a good $G$-quotient which shows that $Y_s$ is affine if $X_{\pi^*s}$ is.
		
		\noindent
		From the isomorphism $\smash{H^0(Y,\mathcal{L}^{\otimes n}) \xto{}{\cong} H^0(X,L^{\otimes n})^G}$, we obtain an isomorphism of $\Z_{\geq 0}$-graded $k$-algebras
		\[ 
			\bigoplus_{n \geq 0} H^0(Y,\mathcal{L}^{\otimes n})^H \xto{}{\cong} \bigoplus_{n \geq 0} H^0(X,L^{\otimes n})^\Gamma
		\]
		and this shows that $\pi$ induces an isomorphism $X^{\sst}/\!\!/\Gamma \to Y^{\sst}/\!\!/H$.

		(2) We first show that for $x \in X$, if the stabilizer $\Gamma_x$ is finite then $\smash{H_{\pi(x)}}$ is finite too. 
		For $h = \gamma G \in \smash{H_{\pi(x)}}$, we have $\pi(x) = h\pi(x) = \pi(\gamma x)$ and as $\pi$ is a geometric quotient, it follows that $x = g\gamma x$ for some $g \in G$. But $g\gamma G = \gamma G = h$. Therefore, the map $\Gamma_x \to \smash{H_{\pi(x)}}$ defined by $\gamma \mapsto \gamma G$ is surjective.
		
		\noindent
		Now to the closedness of the actions. Let $s \in H^0(Y,\mathcal{L}^{\otimes n})^H$ and consider $t := \pi^*s \in H^0(X,L^{\otimes n})^{\Gamma}$. 
		Assume that the $\Gamma$-action on $X_t$ is closed. Let $y \in Y_s$ and $y' \in \smash{\bar{H\cdot y}}$, the closure in $Y_s$. Let $x' \in X$ such that $\pi(x') = y'$. Then $x' \in X_t$. Let $V \sub X_t$ be an open neighborhood of $x'$. Then $\pi(V) = \pi(G\cdot V) \sub Y_s$ is an open neighborhood of $\pi(x') = y'$, because a good quotient maps open $G$-invariant sets to open sets. As $y' \in \smash{\bar{H\cdot \pi(x)}}$, we find $h = \gamma G \in H$ such that $h\pi(x) = \pi(\gamma x)$ lies in $\pi(V)$. Take $x'' \in V$ such that $\pi(x'') = \pi(\gamma x)$. Then there exists $g \in G$ such that $x'' = g\gamma x$. This shows $V \cap \Gamma\cdot x \neq \emptyset$. As $V$ was an arbitrary open neighborhood of $x'$, we see that $x' \in \smash{\bar{\Gamma\cdot x}} = \Gamma\cdot x$. We conclude $y' = \pi(x') \in H\cdot \pi(x)$.
		
		\noindent
		To the claimed isomorphism of the stable quotients. Let $\pi_1: X^{\sst} \to X^{\sst}/\!\!/\Gamma$ and $\pi_2: Y^{\sst} \to Y^{\sst}/\!\!/H$ be the quotient maps and let $\phi: X^{\sst}/\!\!/\Gamma \smash{\xto{}{\cong}} Y^{\sst}/\!\!/H$ be the isomorphism from (1). We get
		\[
			\phi(X^{\st}/\Gamma) = \phi(\pi_1(X^{\st})) = \pi_2(\pi(X^{\st})) = \pi_2(Y^{\st}) = Y^{\st}/H
		\]
		and therefore $\phi$ restricts to an isomorphism as claimed.
	\end{proof}

	We apply \Cref{l:general_stability_descent} to our non-commutative Hilbert schemes. In this case, we have $X = R(Q^\triangledown,(\alpha^+,\alpha))^0$ or $X = R(Q_\triangledown,(\alpha,\alpha^-))^0$, $\Gamma = PG(\alpha^+,\alpha)$ or $\Gamma = PG(\alpha,\alpha^-)$, $G = G(\alpha)$, $H = PG(\alpha^\pm)$ and the quotient $\pi: X \to Y$ is either $\pi^+: \smash{R(Q^{\alpha^+},(1,\alpha))^0} \to \smash{\Hilb^{\alpha^+,\alpha}(Q)}$ or $\pi^-: R(Q_{\alpha^-},(\alpha,1))^0 \to \Hilb_{\alpha,\alpha^+}(Q)$. We obtain:
		
	\begin{prop} \label{p:quotients_Hilbert_schemes}
		\leavevmode
		\begin{enumerate}
			\item We have $\pi^+(R(Q^\triangledown,(\alpha^+,\alpha))^{\theta^+\hypsst}) = \Hilb^{\alpha^+,\alpha}(Q)^{\mathcal{L}(\theta^+)\hypsst}$ and the same holds for the stable loci. We obtain isomorphisms
			\[\begin{tikzcd}
				{M^{\theta^+\hypsst}(Q^\triangledown,(\alpha^+,\alpha))} & {\Hilb^{\alpha^+,\alpha}(Q)^{\mathcal{L}(\theta^+)\hypsst}/\!\!/PG(\alpha^+)} \\[-2em]
				{\text{\rotatebox{90}{$\sub$}}} & {\text{\rotatebox{90}{$\sub$}}} \\[-2em]
				{M^{\theta^+\hypst}(Q^\triangledown,(\alpha^+,\alpha))} & {\Hilb^{\alpha^+,\alpha}(Q)^{\mathcal{L}(\theta^+)\hypst}/PG(\alpha^+)}
				\arrow["\cong", from=1-1, to=1-2]
				\arrow["\cong", from=3-1, to=3-2]
			\end{tikzcd}\]
			
			\item We have $\pi^-(R(Q_\triangledown,(\alpha,\alpha^-))^{\theta^-\hypsst}) = \Hilb_{\alpha,\alpha^-}(Q)^{\mathcal{L}(\theta^-)\hypsst}$ and the same holds for the stable loci. We obtain isomorphisms
			\[\begin{tikzcd}
				{M^{\theta^-\hypsst}(Q_\triangledown,(\alpha,\alpha^-))} & {\Hilb_{\alpha,\alpha^-}(Q)^{\mathcal{L}(\theta^-)\hypsst}/\!\!/PG(\alpha^-)} \\[-2em]
				{\text{\rotatebox{90}{$\sub$}}} & {\text{\rotatebox{90}{$\sub$}}} \\[-2em]
				{M^{\theta^-\hypst}(Q_\triangledown,(\alpha,\alpha^-))} & {\Hilb_{\alpha,\alpha^-}(Q)^{\mathcal{L}(\theta^-)\hypst}/PG(\alpha^-)}
				\arrow["\cong", from=1-1, to=1-2]
				\arrow["\cong", from=3-1, to=3-2]
			\end{tikzcd}\]
		\end{enumerate}
	\end{prop}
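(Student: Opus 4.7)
The plan is to apply \Cref{l:general_stability_descent} with the data
\[
    \Gamma = PG(\alpha^+,\alpha), \quad G = G(\alpha), \quad H = PG(\alpha^+), \quad X = R(Q^{\alpha^+},(1,\alpha))^0, \quad \pi = \pi^+, \quad \mathcal{L} = \mathcal{L}(\theta^+),
\]
together with the dual setup for part (2). Two of the hypotheses come for free. First, $\pi^+$ is a geometric (hence good) $G(\alpha)$-quotient by the very definition of the non-commutative Hilbert scheme. Secondly, the corollary that introduces $\mathcal{L}(\theta^+)$ says precisely that $(\pi^+)^*\mathcal{L}(\theta^+) \cong L(\theta^+)$ as $\Gamma$-equivariant line bundles. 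By King's theorem, $L(\theta^+)$-(semi\nobreakdash-)stability on the ambient affine space $R(Q^\triangledown,(\alpha^+,\alpha))$ coincides with $\theta^+$-(semi\nobreakdash-)stability.

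The one genuine step is to verify the containment
\[
    R(Q^\triangledown,(\alpha^+,\alpha))^{\theta^+\hypsst} \subseteq X,
\]
so that both notions of (semi-)stability live on the same open set. By \Cref{p:theta+-stability}, a $\theta^+$-semi-stable pair $(M,A)$ has $M$ being $\theta$-semi-stable and each $A_i$, $i \in Q_0^+$, invertible. Repeating the cokernel argument of \Cref{l:semi-stable_surjective_injective}(1) applied to $\phi_{M,A}$: invertibility of the $A_i$ on $Q_0^+$ shows that the cokernel of $\phi_{M,A}$ is supported on $Q_0 \setminus Q_0^+$, and $\theta$-semi-stability of $M$ forces this cokernel to vanish. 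Hence $\phi_{M,A}$ is surjective, and by \Cref{l:engel_reineke} we conclude $(M,A) \in X$.

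With this inclusion in place, \Cref{l:general_stability_descent}(1) yields the semi-stable isomorphism at once. For the stable part I would note that \Cref{l:engel_reineke} additionally implies that the $G(\alpha)$-action on all of $X$ has trivial stabilizers, because any element fixing $(M,A)$ must act as the identity on $\im A$, and the subrepresentation generated by $\im A$ is all of $M$. Thus $\pi^+$ is geometric with trivial stabilizers throughout $X$, and \Cref{l:general_stability_descent}(2) supplies the stable isomorphism. Part (2) of the proposition is then proved by the dual argument, replacing $Q^\triangledown$ by $Q_\triangledown$, $\phi_{M,A}$ by $\psi_{M,B}$, surjectivity by injectivity, $Q_0^+$ by $Q_0^-$, and invoking the second halves of \Cref{p:theta+-stability} and \Cref{l:engel_reineke}.

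I expect the only real obstacle to be securing the above containment of semi-stable loci; once in hand, the proposition is a transparent application of the abstract descent machinery to the data packaged in the preceding corollary and propositions.
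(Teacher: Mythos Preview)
Your proof is correct and follows the same route as the paper: both simply apply \Cref{l:general_stability_descent} with $X = R(Q^{\alpha^+},(1,\alpha))^0$, $\Gamma = PG(\alpha^+,\alpha)$, $G = G(\alpha)$, and $\mathcal{L} = \mathcal{L}(\theta^+)$. You are more careful than the paper in spelling out the two auxiliary checks---the containment $R(Q^\triangledown,(\alpha^+,\alpha))^{\theta^+\hypsst} \subseteq X$ and the triviality of $G(\alpha)$-stabilizers on $X$---both of which the paper leaves implicit; your arguments for these are correct. A minor remark: the containment can also be read off from \Cref{l:framed_stability}, since any $\Gamma$-semi-invariant is in particular a $G(\alpha)$-semi-invariant for the restricted character, and that lemma identifies the corresponding $G(\alpha)$-semi-stable locus with $X$.
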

	
	To conclude this section, we describe the push-forwards of the line bundles $\mathcal{L}(\theta^\pm)$ under the $PG(\alpha^\pm)$ equivariant isomorphisms $\Phi: \smash{\Hilb^{\alpha^+,\alpha}(Q)} \to \Gr^\alpha(P^+)$ and $\Psi: \Hilb_{\alpha,\alpha^-}(Q) \to \Gr_\alpha(I^-)$.
	
	We consider the Grassmannian $\Gr^\alpha(P^+)$. Let $\mathcal{Q}_i$ be the rank $\alpha_i$ bundle which arises as the pull-back of the universal quotient bundle on $\Gr^{\alpha_i}((P^+)_i)$ under
	\[
		\Gr^\alpha(P^+) \to \prod_{j \in Q_0} \Gr^{\alpha_j}((P^+)_j) \to \Gr^{\alpha_i}((P^+)_i),
	\]
	the composition of the closed embedding and the projection to the $i$\textsuperscript{th} factor. The pull-back of $\mathcal{Q}_i$ under the composition
	\[
		R(Q^{\alpha^+},(1,\alpha))^0 \xto{}{\pi^+} \Hilb^{\alpha^+,\alpha}(Q) \xto{}{\Phi} \Gr^\alpha(P^+)
	\]
	is the trivial bundle with fiber $V_i$ with the action of $G(1,\alpha)$ on the fiber of $(M,A)$ given by $(t,g)\cdot v = t^{-1}g_iv$. Thus $L_i^+ := (\Phi\pi^+)^*\mathcal{L}_i$ is the trivial line bundle with $G(1,\alpha)$-linearization given by the character $G(1,\alpha) \to k^\times,\ (t,g) \mapsto t^{-\alpha_i}\det(g_i)$. This corresponds to the stability parameter $\alpha_ie_0 - e_i \in \Z \times \Z^{Q_0}$. From the definition of $\eta^+$, we see that
	\[
		L(\eta^+) = \bigotimes_{i \in Q_0^+} (L_i^+)^{\otimes (N-\theta_i)} \otimes \bigotimes_{i \notin Q_0^+} (L_i^+)^{\otimes (-\theta_i)}
	\]
	In a similar fashion, on $\Gr_\alpha(I^-)$ let $\mathcal{U}_i$ be the rank $\alpha_i$ bundle arising as the pull-back of the universal sub-bundle under
	\[
		\Gr_\alpha(I^-) \to \prod_{j \in Q_0} \Gr_{\alpha_j}((I^-)_j) \to \Gr_{\alpha_i}((I^-)_i)
	\]
	and let $L_i^- := (\pi^-\Psi)^*\det(\mathcal{U}_i^\vee)$. We can see that
	\[
		L(\eta^-) = \bigotimes_{i \in Q_0^-} (L_i^-)^{\otimes (N+\theta_i)} \otimes \bigotimes_{i \notin Q_0^-} (L_i^-)^{\otimes \theta_i}.
	\]
	This shows:
	
	\begin{lem} \label{l:line_bundles_Gr}
		Let $N \gg 0$.
		\begin{enumerate}
			\item The push-forward $\Phi_*\mathcal{L}(\theta^+)$ under $\Phi: \smash{\Hilb^{\alpha^+,\alpha}(Q)} \smash{\xto{}{\cong}} \Gr^\alpha(P^+)$ equals
			\[
				\mathcal{L}_{\theta^+} :=  \bigotimes_{i \in Q_0^+} \det(\mathcal{Q}_i)^{\otimes (N-\theta_i)} \otimes \bigotimes_{i \notin Q_0^+} \det(\mathcal{Q}_i)^{\otimes (-\theta_i)}.
			\]
			\item The push-forward $\Psi_*\mathcal{L}(\theta^-)$ under $\Psi: \Hilb_{\alpha,\alpha^-}(Q) \smash{\xto{}{\cong}} \Gr_\alpha(I^-)$ equals
			\[
				\mathcal{L}_{\theta^-} := \bigotimes_{i \in Q_0^-} \det(\mathcal{U}_i^\vee)^{\otimes (N+\theta_i)} \otimes \bigotimes_{i \notin Q_0^-} \det(\mathcal{U}_i^\vee)^{\otimes \theta_i}.
			\]

		\end{enumerate}

	\end{lem}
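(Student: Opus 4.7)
The plan is to deduce both parts from uniqueness of descent, applied to the composition of the quotient map with the isomorphism identifying the non-commutative Hilbert scheme with the quiver Grassmannian. Since $\Phi \circ \pi^+ \colon R(Q^{\alpha^+},(1,\alpha))^0 \to \Gr^\alpha(P^+)$ is a good $G(\alpha)$-quotient (being the composition of a good quotient and an isomorphism), any $PG(\alpha^+)$-equivariant line bundle on $\Gr^\alpha(P^+)$ is determined up to canonical isomorphism by its $G(1,\alpha)$-equivariant pull-back to $R(Q^{\alpha^+},(1,\alpha))^0$, via $G(1,\alpha) \twoheadrightarrow PG(1,\alpha) \cong G(\alpha)$. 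So, since $(\pi^+)^*\mathcal{L}(\theta^+) \cong L(\theta^+)$ by the defining property of the descent, it suffices for part (1) to exhibit an isomorphism
\[
	(\Phi \circ \pi^+)^*\mathcal{L}_{\theta^+} \;\cong\; L(\theta^+)
\]
of $G(1,\alpha)$-equivariant line bundles on the open semi-stable locus.

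This verification is essentially already carried out in the paragraphs immediately preceding the lemma. On the one hand, the character calculation recorded there gives $L(\theta^+) \cong L(\eta^+)$ as $G(1,\alpha)$-equivariant line bundles, where $\eta^+$ is the stability parameter on $Q^{\alpha^+}$ appearing in that discussion. On the other hand, by construction $L_i^+ = (\Phi \circ \pi^+)^* \det \mathcal{Q}_i$, so substituting the formula for $\mathcal{L}_{\theta^+}$ from the statement of the lemma into $(\Phi \circ \pi^+)^*$ turns each $\det \mathcal{Q}_i$ into $L_i^+$ and produces exactly the tensor expression
\[
	\bigotimes_{i \in Q_0^+}(L_i^+)^{\otimes (N-\theta_i)} \;\otimes\; \bigotimes_{i \notin Q_0^+}(L_i^+)^{\otimes (-\theta_i)}
\]
that was identified with $L(\eta^+)$ in the text. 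Chaining these two identifications yields the required isomorphism and hence part (1).

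Part (2) is the dual statement, and I would prove it by the same three-step pattern: (a) reduce to comparing pull-backs along the good quotient $\Psi \circ \pi^-$; (b) identify $L(\theta^-)$ pulled back through $G(\alpha,1) \twoheadrightarrow PG(\alpha,1) \cong G(\alpha)$ with $L(\eta^-)$ by the character computation preceding the lemma; (c) use $L_i^- = (\pi^- \circ \Psi)^* \det(\mathcal{U}_i^\vee)$ to recognise $(\Psi \circ \pi^-)^*\mathcal{L}_{\theta^-}$ as exactly that tensor product of $L_i^-$'s identified with $L(\eta^-)$.

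The step I expect to need the most care is the sign bookkeeping on the injective side: the natural trivial-fibre character attached to the universal subbundle $\mathcal{U}_i$ has the opposite sign from the quotient-bundle case on the projective side, and it is precisely the passage to the dual $\mathcal{U}_i^\vee$ that produces the line bundle corresponding to the stability parameter $e_i - \alpha_i e_\infty$ on $Q_{\alpha^-}$, rather than its negative. This boils down to a short direct computation using the explicit action $(g,h) \cdot (M,B) = (g \cdot M, hBg^{-1})$ of $G(\alpha,\beta)$ and the formula for $\chi_{\eta^-}$; once this sign is confirmed, the descent-uniqueness argument of part (1) closes the proof of part (2).
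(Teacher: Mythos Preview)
Your proposal is correct and follows exactly the paper's approach: the lemma is stated after ``This shows:'' because the preceding paragraphs compute $(\Phi\pi^+)^*\det(\mathcal{Q}_i) = L_i^+$, assemble these into $L(\eta^+)$, and identify $L(\eta^+)$ with the $G(1,\alpha)$-equivariant structure on $L(\theta^+)$; uniqueness of descent then forces $\Phi_*\mathcal{L}(\theta^+) = \mathcal{L}_{\theta^+}$, and dually for part (2). One cosmetic slip: in your step (c) you write $(\pi^-\circ\Psi)^*$ where you mean $(\Psi\circ\pi^-)^*$, matching your own step (a).
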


	\begin{ex} \label{e:ex1.4}
		Recall Examples \ref{e:ex1.1}, \ref{e:ex1.2}, and \ref{e:ex1.3}. We have $\theta = (na_1,\ldots,na_m,-|a|)$ and therefore $Q_0^+ = \{q_1,\ldots,q_m\}$ and $Q_0^- = \{s\}$. 
		
		\noindent
		The quiver Grassmannian $\Gr^\alpha(P^+)$ identifies with $\Gr^n(k^m)$. On the latter, there is the universal quotient bundle $\mathcal{Q}$ of $k^m \otimes \OO$ of rank $n$. The quotient bundles on $\Gr^\alpha(P^+)$ are $\mathcal{Q}_{q_i} \cong \OO$ for $i=1,\ldots,m$ and $\mathcal{Q}_s \cong \mathcal{Q}$ for the source $s$. Therefore
		\[
			\mathcal{L}_{\theta^+} \cong \det(\mathcal{Q})^{\otimes |a|}.
		\]
		On the other hand, $\Gr_\alpha(I^-)$ is isomorphic to $(\P^{n-1})^m$. On $(\P^{n-1})^m$, we have the subbundles $\OO(0,\ldots,-1,\ldots,0) = \OO_{\P^{n-1}} \boxtimes \ldots \boxtimes \OO_{\P^{n-1}}(-1) \boxtimes \ldots \boxtimes \OO_{\P^{n-1}}$ of $k^n \otimes \OO$, with a $-1$ in the $i$\textsuperscript{th} position. This bundle identifies with $\UU_{q_i}$ on $\Gr_\alpha(I^-)$. The subbundle $\UU_s$ is $k^n \otimes \OO$, the trivial rank $n$ bundle. We get 
		\[
			\mathcal{L}_{\theta^-} \cong \OO(na_1,\ldots,na_m) = \OO(a_1,\ldots,a_m)^{\otimes n}.
		\]
	\end{ex}
	
	As in the example of the linearly oriented quiver of type $A_n$ the stability parameter is trivial and hence the semi-stable moduli space will be a point, it is not so relevant to determine the line bundles on the quiver Grassmannians in these cases.
	
	\section{Maps to non-commutative Hilbert schemes} \label{s:maps_to_NCHS}
	
	Let $Q$ be an acyclic quiver, $\alpha$ a dimension vector and $\theta$ a stability parameter such that $\theta(\alpha) = 0$. 
	We define two maps
	\begin{align*}
		f^+: R(Q,\alpha) \to R(Q^\triangledown,(\alpha^+,\alpha)),\ & M \mapsto (M,\id) \\
		f^-: R(Q,\alpha) \to R(Q_\triangledown,(\alpha,\alpha^-)),\ & M \mapsto (M,\id),
	\end{align*}
	where $\id = (\id_{V_i})_i \in \smash{\bigoplus_{i \in Q_0^\pm}} \End_k(V_i)$.
	The maps are clearly algebraic and they are $G(\alpha)$-equivariant when we let $G(\alpha)$ act on the right-hand sides by the morphisms of algebraic groups
	\begin{align*}
		d^+: G(\alpha) &\to G(\alpha^+) \times G(\alpha) &
		d^-: G(\alpha) &\to G(\alpha) \times G(\alpha^-) \\
		g &\mapsto ((g_i)_{i \in Q_0^+},g) &
		g &\mapsto (g,(g_i)_{i \in Q_0^-}).
	\end{align*}
	They give rise to morphisms $d^+: PG(\alpha) \to PG(\alpha^+,\alpha)$ and $d^-: PG(\alpha) \to PG(\alpha,\alpha^-)$.
	\pagebreak[3]
	
	\begin{lem} \label{l:saturation}
		\leavevmode
		\begin{enumerate}
			\item The $PG(\alpha^+,\alpha)$-saturation of $f^+(R(Q,\alpha)^{\theta\hypsst})$ equals $R(Q^\triangledown,(\alpha^+,\alpha))^{\theta^+\hypsst}$ and the same is true for the stable loci. The map $f^+$ induces isomorphisms
			\[\begin{tikzcd}
				{M^{\theta\hypsst}(Q,\alpha)} & {M^{\theta^+\hypsst}(Q^\triangledown,(\alpha^+,\alpha))}  \\[-2em]
				{\text{\rotatebox{90}{$\sub$}}} & {\text{\rotatebox{90}{$\sub$}}} \\[-2em]
				{M^{\theta\hypst}(Q,\alpha)} & {M^{\theta^+\hypst}(Q^\triangledown,(\alpha^+,\alpha))}
				\arrow["\cong", from=1-1, to=1-2]
				\arrow["\cong", from=3-1, to=3-2]
			\end{tikzcd}\]
			
			\item The $PG(\alpha,\alpha^-)$-saturation of $f^-(R(Q,\alpha)^{\theta\hypsst})$ equals $R(Q_\triangledown,(\alpha,\alpha^-))^{\theta^-\hypsst}$ and the same is true for the stable loci. The map $f^+$ induces isomorphisms
			\[\begin{tikzcd}
				{M^{\theta\hypsst}(Q,\alpha)} & {M^{\theta^-\hypsst}(Q^\triangledown,(\alpha,\alpha^-))} \\[-2em]
				{\text{\rotatebox{90}{$\sub$}}} & {\text{\rotatebox{90}{$\sub$}}} \\[-2em]
				{M^{\theta\hypsst}(Q,\alpha)} & {M^{\theta^-\hypsst}(Q^\triangledown,(\alpha,\alpha^-))} 
				\arrow["\cong", from=1-1, to=1-2]
				\arrow["\cong", from=3-1, to=3-2]
			\end{tikzcd}\]
		\end{enumerate}
	\end{lem}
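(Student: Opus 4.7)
I prove part (1); part (2) follows by the dual argument. The plan has two stages: first establish the saturation claim by a direct group-theoretic computation, then construct an explicit inverse morphism at the level of moduli spaces.

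For the saturation claim, I apply \Cref{p:theta+-stability}: a pair $(M,A)$ is $\theta^+$-semi-stable iff $M$ is $\theta$-semi-stable and each $A_i$ (for $i \in Q_0^+$) is invertible. Setting $h := (A_i)_{i \in Q_0^+} \in G(\alpha^+)$, the element $(h,\id) \in G(\alpha^+,\alpha)$ acts as $(h,\id) \cdot (M,A) = (M, A h^{-1}) = (M,\id) = f^+(M)$. Hence every $\theta^+$-semi-stable point lies in the $PG(\alpha^+,\alpha)$-orbit of some $f^+(M)$ with $M$ $\theta$-semi-stable. The reverse inclusion is immediate by \Cref{p:theta+-stability} and the invariance of semi-stability under the group action. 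The stable case is identical.

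For the moduli isomorphism, I introduce the projection $\rho: R(Q^\triangledown,(\alpha^+,\alpha))^{\theta^+\hypsst} \to R(Q,\alpha)^{\theta\hypsst}$, $(M,A) \mapsto M$. It is well-defined by \Cref{p:theta+-stability} and $G(\alpha^+,\alpha)$-equivariant via the projection $G(\alpha^+,\alpha) \to G(\alpha)$. Composing with the quotient map $\pi^Y: R(Q,\alpha)^{\theta\hypsst} \to M^{\theta\hypsst}(Q,\alpha)$ yields a $PG(\alpha^+,\alpha)$-invariant morphism (since $PG(\alpha)$ acts trivially on $M^{\theta\hypsst}(Q,\alpha)$ by construction), which factors uniquely through $\pi^+$ and produces a morphism $r: M^{\theta^+\hypsst}(Q^\triangledown,(\alpha^+,\alpha)) \to M^{\theta\hypsst}(Q,\alpha)$. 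Dually, the statement's induced morphism $F: M^{\theta\hypsst}(Q,\alpha) \to M^{\theta^+\hypsst}(Q^\triangledown,(\alpha^+,\alpha))$ arises from $f^+$, which is $G(\alpha)$-equivariant via $d^+$.

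Since $\rho \circ f^+ = \id$ as morphisms of varieties, the universal properties of the GIT quotients yield $r \circ F \circ \pi^Y = r \circ \pi^+ \circ f^+ = \pi^Y \circ \rho \circ f^+ = \pi^Y$, hence $r \circ F = \id$. Thus $F$ is a section of $r$, and as the target is separated $F$ is a closed immersion. By the saturation claim $F$ is surjective on closed points; since the target is reduced (being a good quotient of a reduced variety), any set-theoretically surjective closed immersion is an isomorphism, so $F$ is an isomorphism with inverse $r$. Restricting to the open stable loci yields the stable analogue. The only delicate point is the universal-property argument producing $r$, which hinges on the triviality of the $PG(\alpha^+,\alpha)$-action on $M^{\theta\hypsst}(Q,\alpha)$ induced by the surjection $PG(\alpha^+,\alpha) \to PG(\alpha)$; beyond this, the proof is a routine diagram chase.
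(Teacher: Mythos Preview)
Your proof is correct and follows the same approach as the paper: the saturation claim via \Cref{p:theta+-stability}, and the inverse induced by the projection $(M,A) \mapsto M$. One simplification: rather than the closed-immersion plus surjectivity argument, you can verify $F \circ r = \id$ directly, since your saturation computation already shows that $(M,A)$ and $f^+(\rho(M,A)) = (M,\id)$ lie in the same $PG(\alpha^+,\alpha)$-orbit and hence map to the same point in the quotient.
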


	\begin{proof}
		The statements about the saturations of the images follow directly from \Cref{p:theta+-stability}. The inverse of the map $M^{\theta\hypsst}(Q,\alpha) \to M^{\theta^+\hypsst}(Q^\triangledown,(\alpha^+,\alpha))$ is induced by
		\[
			R(Q^\triangledown,(\alpha^+,\alpha))^{\theta^+\hypsst} \to R(Q,\alpha)^{\theta\hypsst}
		\]
		which is defined by $(M,A) \mapsto M$.
	\end{proof}
	
	We now define
	\begin{align*}
		\tilde{\phi}: R(Q,\alpha)^{\theta\hypsst} \to \Hilb^{\alpha^+,\alpha}(Q),\ & M \mapsto [M,\id] \\
		\tilde{\psi}: R(Q,\alpha)^{\theta\hypsst} \to \Hilb_{\alpha,\alpha^-}(Q),\ & M \mapsto [M,\id] 
	\end{align*}
	These maps are well-defined and algebraic. They are $G(\alpha)$-equivariant with respect to the projections $G(\alpha) \to G(\alpha^\pm)$ and therefore $PG(\alpha)$-equivariant with respect to the induced morphism $PG(\alpha) \to PG(\alpha^\pm)$.
	
	The composition $\Phi \tilde{\phi}$ is $\phi: R(Q,\alpha)^{\theta\hypsst} \to \Gr^\alpha(P^+)$ and $\Psi \tilde{\psi}$ is $\psi: R(Q,\alpha)^{\theta\hypsst} \to \Gr_\alpha(I^-)$. This implies: 
		
	\begin{prop}	
		The maps $\phi$ and $\psi$ are well-defined, algebraic and $PG(\alpha)$-equi\-variant with respect to $PG(\alpha) \to PG(\alpha^\pm)$.
	\end{prop}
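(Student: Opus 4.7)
The proposition follows directly from the two factorizations $\phi = \Phi \circ \tilde\phi$ and $\psi = \Psi \circ \tilde\psi$ asserted in the paragraph preceding the statement; my plan is to confirm these factorizations and then inherit the required properties through the composition.

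To verify $\phi = \Phi \circ \tilde\phi$, unravel the definitions: $\tilde\phi(M) = [M,\id]$ with $\id = (\id_{V_i})_{i \in Q_0^+}$, and by \Cref{p:identification_NCHS_QGr} the isomorphism $\Phi$ sends $[M,A]$ to $\ker \phi_{M,A}$. With the framing parameter $\beta = \alpha^+$, the vector spaces $E_i$ are $V_i$ for $i \in Q_0^+$ and $0$ otherwise, so $P_{\alpha^+} = P^+$, and the general formula $\phi_{M,A}(p \otimes w) = M_p A_{s(p)}(w)$ specializes at $A = \id$ to the homomorphism $\phi_M$ from \Cref{s:map_to_Gr}. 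Hence $\Phi(\tilde\phi(M)) = \ker \phi_M = \phi(M)$. A dual substitution into the formula for $\psi_{M,B}$ gives $\Psi(\tilde\psi(M)) = \im \psi_M = \psi(M)$.

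With the factorizations in hand, the conclusions follow mechanically. Well-definedness and algebraicity of $\phi$ and $\psi$ are inherited from $\tilde\phi, \tilde\psi$ (which are algebraic by construction as $\pi^\pm \circ f^\pm$) and from the algebraic isomorphisms $\Phi, \Psi$ of \Cref{p:identification_NCHS_QGr}. For equivariance, $\tilde\phi$ is $PG(\alpha)$-equivariant relative to the homomorphism $PG(\alpha) \to PG(\alpha^+)$ induced by $d^+$, and $\Phi$ is $PG(\alpha^+)$-equivariant by \Cref{l:equivariance}; composing yields $PG(\alpha)$-equivariance of $\phi$ along $PG(\alpha) \to PG(\alpha^+)$. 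The dual argument for $\tilde\psi$, $\Psi$, and the morphism $PG(\alpha) \to PG(\alpha^-)$ induced by $d^-$ settles the claim for $\psi$.

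There is no genuine obstacle: the only substantive point is that the ad hoc definitions of $\phi$ and $\psi$ in \Cref{s:map_to_Gr} agree with the framed-representation construction under the identity framing, and the explicit formulas for $\phi_{M,A}$ and $\psi_{M,B}$ reduce this to a one-line substitution.
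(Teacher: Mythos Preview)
Your proposal is correct and follows exactly the paper's approach: the proposition is deduced immediately from the factorizations $\phi = \Phi\circ\tilde\phi$ and $\psi = \Psi\circ\tilde\psi$, with well-definedness and algebraicity inherited from $\tilde\phi,\tilde\psi$ and equivariance from \Cref{l:equivariance}. Your explicit verification of the factorizations (via the substitution $A=\id$, $E_i=V_i$) simply spells out what the paper leaves implicit.
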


	\begin{ex} \label{e:ex2.4}
		The maps $R(Q,\alpha) \to \Gr_\alpha(I) \cong \Omega \sub \GL(V)/H$ and $R(Q,\alpha) \to \Gr^\alpha(P) \cong \Upsilon \sub \GL(V)/H$ are equivariant with respect to $PG(\alpha)$; the action of $PG(\alpha)$ is described in \Cref{e:ex2.3}.
	\end{ex}

	\section{The correspondence} \label{s:correspondence}
	
	We are now able to state and prove the two versions of the Gelfand--MacPherson correspondence for quiver moduli. First, we state the version for non-commutative Hilbert schemes.

	\begin{thm} \label{t:correspondence1}
		We have
		\begin{align*}
			\tilde{\phi}(R(Q,\alpha)^{\theta\hypsst}) &= \Hilb^{\alpha^+,\alpha}(Q)^{\mathcal{L}(\theta^+)\hypsst} \\
			\tilde{\phi}(R(Q,\alpha)^{\theta\hypst}) &= \Hilb^{\alpha^+,\alpha}(Q)^{\mathcal{L}(\theta^+)\hypst}
		\end{align*}
		and
		\begin{align*}
			\tilde{\psi}(R(Q,\alpha)^{\theta\hypsst}) &= \Hilb_{\alpha,\alpha^-}(Q)^{\mathcal{L}(\theta^-)\hypst} \\
			\tilde{\psi}(R(Q,\alpha)^{\theta\hypst}) &= \Hilb_{\alpha,\alpha^-}(Q)^{\mathcal{L}(\theta^-)\hypst}.
		\end{align*}
		Moreover, $\tilde{\phi}$ and $\tilde{\psi}$ induce isomorphisms
		\[\begin{tikzcd}
			{\Hilb^{\alpha^+,\alpha}(Q)^{\mathcal{L}(\theta^+)\hypsst}/\!\!/PG(\alpha^+)} & {M^{\theta\hypsst}(Q,\alpha)} & {\Hilb_{\alpha,\alpha^-}(Q)^{\mathcal{L}(\theta^-)\hypsst}/\!\!/PG(\alpha^-)} \\[-2em]
			{\text{\rotatebox{90}{$\sub$}}} & {\text{\rotatebox{90}{$\sub$}}} & {\text{\rotatebox{90}{$\sub$}}} \\[-2em]
			{\Hilb^{\alpha^+,\alpha}(Q)^{\mathcal{L}(\theta^+)\hypst}/PG(\alpha^+)} & {M^{\theta\hypst}(Q,\alpha)} & {\Hilb_{\alpha,\alpha^-}(Q)^{\mathcal{L}(\theta^-)\hypst}/PG(\alpha^-).}
			\arrow["\cong", swap, from=1-2, to=1-1]
			\arrow["\cong", from=1-2, to=1-3]
			\arrow["\cong", swap, from=3-2, to=3-1]
			\arrow["\cong", from=3-2, to=3-3]
		\end{tikzcd}\]
	\end{thm}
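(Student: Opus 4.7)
The plan is to derive \Cref{t:correspondence1} as a composition of the isomorphisms already established in \Cref{l:saturation} and \Cref{p:quotients_Hilbert_schemes}. By construction, $\tilde{\phi}$ factors as $R(Q,\alpha)^{\theta\hypsst} \xrightarrow{f^+} R(Q^\triangledown,(\alpha^+,\alpha))^{\theta^+\hypsst} \xrightarrow{\pi^+} \Hilb^{\alpha^+,\alpha}(Q)$, and analogously $\tilde{\psi} = \pi^- \circ f^-$. So the strategy is to read off the image description and the quotient isomorphism by stringing together these two factors.

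The first step is to verify the image identity $\tilde{\phi}(R(Q,\alpha)^{\theta\hypsst}) = \Hilb^{\alpha^+,\alpha}(Q)^{\mathcal{L}(\theta^+)\hypsst}$. The inclusion $\subseteq$ is immediate: if $M$ is $\theta$-semi-stable, then by \Cref{p:theta+-stability} the pair $(M,\id)=f^+(M)$ is $\theta^+$-semi-stable, so $\pi^+(f^+(M))=[M,\id] \in \Hilb^{\alpha^+,\alpha}(Q)^{\mathcal{L}(\theta^+)\hypsst}$ by \Cref{p:quotients_Hilbert_schemes}. For the reverse inclusion, take $[N,A]$ in the semi-stable locus; by \Cref{p:quotients_Hilbert_schemes} we may represent it by $(N,A) \in R(Q^\triangledown,(\alpha^+,\alpha))^{\theta^+\hypsst}$, and by \Cref{p:theta+-stability} each $A_i$, $i \in Q_0^+$, is invertible while $N$ is $\theta$-semi-stable. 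Choosing $g \in G(\alpha)$ with $g_i = A_i^{-1}$ for $i \in Q_0^+$ and $g_i = \id$ otherwise, the $G(\alpha)$-action produces $g \cdot (N,A) = (g\cdot N, \id) = f^+(g\cdot N)$. Since $g\cdot N$ remains $\theta$-semi-stable, we get $[N,A] = \tilde{\phi}(g\cdot N)$, which proves surjectivity onto the semi-stable locus. The same argument with $\theta^+$-stability in place of $\theta^+$-semi-stability (both parts of \Cref{p:theta+-stability} and \Cref{p:quotients_Hilbert_schemes} have stable versions) gives the analogous statement for the stable locus.

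For the induced isomorphism on GIT quotients, I would simply compose
\[
M^{\theta\hypsst}(Q,\alpha) \xrightarrow[\cong]{f^+_*} M^{\theta^+\hypsst}(Q^\triangledown,(\alpha^+,\alpha)) \xrightarrow[\cong]{\pi^+_*} \Hilb^{\alpha^+,\alpha}(Q)^{\mathcal{L}(\theta^+)\hypsst}/\!\!/PG(\alpha^+),
\]
where the first arrow is the isomorphism of \Cref{l:saturation} and the second is that of \Cref{p:quotients_Hilbert_schemes}. Since $\tilde{\phi} = \pi^+ \circ f^+$ at the level of varieties, and the two factors are $PG(\alpha) \to PG(\alpha^+,\alpha) \to PG(\alpha^+)$-equivariant respectively, the composition descends to precisely the map induced by $\tilde{\phi}$, giving the claimed isomorphism. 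Restricting to the stable part uses the stable halves of the same two results and yields the lower isomorphism. The $\tilde{\psi}$ half is formally identical, replacing $Q^\triangledown$, $\theta^+$, $f^+$, $\pi^+$ by $Q_\triangledown$, $\theta^-$, $f^-$, $\pi^-$ throughout.

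There is no real obstacle here beyond careful bookkeeping; the key inputs (\Cref{p:theta+-stability}, \Cref{p:quotients_Hilbert_schemes}, \Cref{l:saturation}) have already been set up so that the theorem is essentially their formal composition. The one mildly delicate point is the surjectivity argument in the first paragraph, where one must observe that the surjectivity of $G(\alpha) \twoheadrightarrow G(\alpha^+)$ given by $g \mapsto (g_i)_{i \in Q_0^+}$ makes the $G(\alpha)$-saturation (relevant for $\pi^+$) as large as the $PG(\alpha^+,\alpha)$-saturation (used in \Cref{l:saturation}); this is what permits normalizing $A$ to $\id$ within a single $G(\alpha)$-orbit.
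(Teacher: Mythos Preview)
Your proof is correct and follows essentially the same route as the paper: both obtain the theorem by composing \Cref{l:saturation} with \Cref{p:quotients_Hilbert_schemes} via the factorization $\tilde{\phi}=\pi^+\circ f^+$. The paper packages this into a single commutative diagram, while you spell out the image identity by giving the two inclusions separately; in particular your explicit normalization of $A$ to $\id$ inside a $G(\alpha)$-orbit is exactly the ``mildly delicate point'' that the paper leaves implicit when it asserts that the image equalities follow from those two results.
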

	
	\begin{proof}
		The equalities for the images of the (semi\nobreakdash-)stable loci under $\phi^\pm$ follows from \Cref{p:quotients_Hilbert_schemes} and \Cref{l:saturation}.
		
		\noindent
		To show that $\tilde{\phi}$ induces an isomorphism $M^{\theta\hypsst}(Q,\alpha) \smash{\xto{}{\cong}} \smash{\Hilb^{\alpha^+,\alpha}(Q)^{\mathcal{L}(\theta^+)\hypsst}/\!\!/PG(\alpha^+)}$, we consider the following big diagram:
		\[\begin{tikzcd}
			&[-1em] {R(Q,\alpha)^{\theta\hypsst}} &[-1.2em] {R(Q^\triangledown,(\alpha^+,\alpha))^{\theta^+\hypsst}} &[-5em] \subseteq &[-3em] {R(Q^{\alpha^+},(1,\alpha))^0} \\
			&& {\Hilb^{\alpha^+,\alpha}(Q)^{\mathcal{L}(\theta^+)\hypsst}} & \subseteq & {\Hilb^{\alpha^+,\alpha}(Q)} \\
			{M^{\theta\hypsst}(Q,\alpha)} & {M^{\theta^+\hypsst}(Q^\triangledown,(\alpha^+,\alpha))} & {\Hilb^{\alpha^+,\alpha}(Q)^{\mathcal{L}(\theta^+)\hypsst}/\!\!/PG(\alpha^+)}
			\arrow[from=1-2, to=3-1]
			\arrow["f^+",from=1-2, to=1-3]
			\arrow[from=1-3, to=3-2, start anchor={[xshift=-5ex]}]
			\arrow["\cong", from=3-1, to=3-2]
			\arrow["\tilde{\phi}",from=1-2, to=2-3, crossing over]
			\arrow[from=1-3, to=2-3]
			\arrow["\pi^+",from=1-5, to=2-5]
			\arrow[from=2-3, to=3-3]
			\arrow["\cong", from=3-2, to=3-3]
		\end{tikzcd}\]
		The left-hand parallelogram is commutative by \Cref{l:saturation}. Commutativity of the triangle in the middle follows from \Cref{p:quotients_Hilbert_schemes}. The claim is proved. 
		
		\noindent		
		In a similar vein, we see that the induced isomorphism restricts to an isomorphism $M^{\theta\hypst}(Q,\alpha) \smash{\xto{}{\cong}} \smash{\Hilb^{\alpha^+,\alpha}(Q)^{\mathcal{L}(\theta^+)\hypst}/PG(\alpha^+)}$. The claims for $\tilde{\psi}$ can be proved analogously.
	\end{proof}
	
	Now to the version using quiver Grassmannians. Recall the definition of the line bundles $\mathcal{L}_{\theta^\pm}$ in \Cref{l:line_bundles_Gr}.
	
	\begin{thm} \label{t:correspondence2}
		We have
		\begin{align*}
			\phi(R(Q,\alpha)^{\theta\hypsst}) &= \Gr^\alpha(P^+)^{\LL_{\theta^+}\hypsst} &
			\psi(R(Q,\alpha)^{\theta\hypsst}) &= \Gr_\alpha(I^-)^{\LL_{\theta^-}\hypsst} \\
			\phi(R(Q,\alpha)^{\theta\hypst}) &= \Gr^\alpha(P^+)^{\LL_{\theta^+}\hypst} &
			\psi(R(Q,\alpha)^{\theta\hypst}) &= \Gr_\alpha(I^-)^{\LL_{\theta^-}\hypst}.
		\end{align*}
		Moreover, $\phi$ and $\psi$ induce isomorphisms
		\[\begin{tikzcd}
			{\Gr^\alpha(P^+)^{\LL_{\theta^+}\hypsst}/\!\!/PG(\alpha^+)} & {M^{\theta\hypsst}(Q,\alpha)} & {\Gr_\alpha(I^-)^{\LL_{\theta^-}\hypsst}/\!\!/PG(\alpha^-)} \\[-2em]
			{\text{\rotatebox{90}{$\sub$}}} & {\text{\rotatebox{90}{$\sub$}}} & {\text{\rotatebox{90}{$\sub$}}} \\[-2em]
			{\Gr^\alpha(P^+)^{\LL_{\theta^+}\hypst}/PG(\alpha^+)} & {M^{\theta\hypst}(Q,\alpha)} & {\Gr_\alpha(I^-)^{\LL_{\theta^-}\hypst}/PG(\alpha^-)}
			\arrow["\cong", from=1-2, to=1-3]
			\arrow["\cong", from=3-2, to=3-3]
			\arrow["\cong", swap, from=1-2, to=1-1]
			\arrow["\cong", swap, from=3-2, to=3-1]
		\end{tikzcd}\]
	\end{thm}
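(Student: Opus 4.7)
The plan is to deduce \Cref{t:correspondence2} from \Cref{t:correspondence1} by transporting the statement along the $PG(\alpha^\pm)$-equivariant isomorphisms $\Phi\colon \Hilb^{\alpha^+,\alpha}(Q) \smash{\xto{}{\cong}} \Gr^\alpha(P^+)$ and $\Psi\colon \Hilb_{\alpha,\alpha^-}(Q) \smash{\xto{}{\cong}} \Gr_\alpha(I^-)$ provided by \Cref{p:identification_NCHS_QGr} and \Cref{l:equivariance}. The crucial observation is that, as recorded at the end of \Cref{s:maps_to_NCHS}, the maps factor as $\phi = \Phi \circ \tilde{\phi}$ and $\psi = \Psi \circ \tilde{\psi}$, so every assertion about the images of $\phi$ and $\psi$ follows from the corresponding assertion about $\tilde{\phi}$ and $\tilde{\psi}$ by applying $\Phi$ or $\Psi$.

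First I would invoke \Cref{l:line_bundles_Gr} to identify the push-forwards $\Phi_*\mathcal{L}(\theta^+) = \LL_{\theta^+}$ and $\Psi_*\mathcal{L}(\theta^-) = \LL_{\theta^-}$ as $PG(\alpha^\pm)$-equivariant line bundles, equivalently $\Phi^*\LL_{\theta^+} \cong \mathcal{L}(\theta^+)$ and $\Psi^*\LL_{\theta^-} \cong \mathcal{L}(\theta^-)$. Because $\Phi$ and $\Psi$ are equivariant isomorphisms of varieties, they identify the semi-stable and stable loci on either side; explicitly,
\[
\Phi\bigl(\Hilb^{\alpha^+,\alpha}(Q)^{\mathcal{L}(\theta^+)\hypsst}\bigr) = \Gr^\alpha(P^+)^{\LL_{\theta^+}\hypsst},
\]
and analogously for the stable locus and for $\Psi$. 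Combining these four equalities with the image identities in \Cref{t:correspondence1} yields the four image identities for $\phi$ and $\psi$ in the statement.

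For the displayed diagram of isomorphisms, the equivariant isomorphisms $\Phi$ and $\Psi$ restrict to $PG(\alpha^\pm)$-equivariant isomorphisms of the semi-stable loci and descend to isomorphisms of the GIT quotients
\[
\Hilb^{\alpha^+,\alpha}(Q)^{\mathcal{L}(\theta^+)\hypsst}/\!\!/PG(\alpha^+) \smash{\xto{}{\cong}} \Gr^\alpha(P^+)^{\LL_{\theta^+}\hypsst}/\!\!/PG(\alpha^+),
\]
and analogously for the stable quotients and for $\Psi$. Pasting these with the isomorphisms supplied by \Cref{t:correspondence1} produces the commutative diagram in the statement. The argument is essentially formal and I do not expect any substantial obstacle: the genuinely technical work has already been done in constructing the descended line bundles in \Cref{s:descent} and in identifying the quiver Grassmannians with non-commutative Hilbert schemes in \Cref{s:NCHS}, so the only care needed here is a careful bookkeeping of equivariance and compatibility of linearizations under $\Phi$ and $\Psi$.
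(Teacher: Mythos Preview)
Your proposal is correct and follows exactly the paper's approach: the paper's proof is the one-line ``Follows from \Cref{p:identification_NCHS_QGr}, \Cref{l:equivariance}, and \Cref{t:correspondence1},'' and you have simply unpacked this by noting $\phi = \Phi\tilde{\phi}$, $\psi = \Psi\tilde{\psi}$, and invoking \Cref{l:line_bundles_Gr} for the compatibility of linearizations. There is nothing missing and no divergence from the paper.
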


	\begin{proof}
		Follows from \Cref{p:identification_NCHS_QGr}, \Cref{l:equivariance}, and \Cref{t:correspondence1}.
	\end{proof}
	
	\begin{ex}
		For the $m$-subspace quiver $Q$ the dimension vector $\alpha = (1,\ldots,1,n)$ and $\theta = (na_1,\ldots,na_m,-|a|)$, we obtain isomorphisms
		\[
			\Gr^n(k^m)^{\sst}/\!\!/(k^\times)^m \xot{}{\cong} M^{\theta\hypsst}(Q,\alpha) \xto{}{\cong} ((\P^{n-1})^m)^{\sst}/\!\!/\PGL_n(k)
		\]
		where semi-stability on $\Gr^n(k^m)$ is with respect to $\mathcal{L}_{\theta^+} = \det(\mathcal{Q})^{\otimes |a|}$ and semi-stability on $(\P^{n-1})^m$ is with respect to $\mathcal{L}_{\theta^-} = \OO(a_1,\ldots,a_m)^{\otimes n}$.
	\end{ex}

	\begin{rem} \label{r:hu_kim}
		In \cite{HK:13}, Hu and Kim consider the following situation. Let $Q$ be a bipartite quiver, which means that every vertex of $Q$ is either a source or a sink. Let $I, J \sub Q_0$ be the sets of sources and sinks respectively. Let $\alpha$ be a dimension vector for $Q$ and $\theta$ be a stability parameter such that $\theta(\alpha) = 0$. Assume that $\theta_i > 0$ for every $i \in I$, while $\theta_j < 0$ for every $j \in J$. Fix vector spaces $V_i$ of dimension $\alpha_i$ for every $i \in Q_0$. We get
		\begin{align*}
			P^+ &= \bigoplus_{i \in I} P(i) \otimes V_i \\
			I^- &= \bigoplus_{j \in J} I(j) \otimes V_j.
		\end{align*}
		For $i \in I$, we have $P^+_i = k\epsilon_i \otimes V_i$, so 
		\[
			\Gr^\alpha(P^+) \cong \prod_{j \in J} \Gr^{\alpha_j}(P^+_j) \cong \prod_{j \in J} \Gr^{\alpha_j}\Big(\bigoplus_{\substack{a \in Q_1\\t(a) = j}} V_{s(a)}\Big).
		\]
		This variety carries an action of $G(\alpha^+) = \prod_{i \in I} \GL(V_i)$ which descends to an action of $PG(\alpha)$. We also obtain an ample linearization
		\[
			\mathcal{L}_{\theta^+} = \bigotimes_{j \in J} \det(\mathcal{Q}_j)^{-\theta_j}
		\]
		of the action. The semi-stable quotient is isomorphic to $M^{\theta\hypsst}(Q,\alpha)$. Similarly for the injective representation. There, $I^-_j = k\epsilon_j^* \otimes V_j$ and thus
		\[
			\Gr_\alpha(I^-) \cong \prod_{i \in I} \Gr_{\alpha_i}(I^-_i) \cong \prod_{i \in I} \Gr^{\alpha_i}\Big(\bigoplus_{\substack{a \in Q_1\\s(a) = i}} V_{t(a)}\Big).
		\]
		On this variety, $G(\alpha^-) = \smash{\prod_{j \in J}} \GL(V_j)$ acts and the action descends to an action of $PG(\alpha^-)$. It is linearized by 
		\[
			\mathcal{L}_{\theta^-} = \bigotimes_{i \in I} \det(\mathcal{U}_i^\vee)^{\theta_i}.
		\]
		Also the semi-stable quotient of this Grassmannian is isomorphic to $M^{\theta\hypsst}(Q,\alpha)$. We recover \cite[Thm.\ 1.1]{HK:13}.
	\end{rem}
	
	\begin{rem}
		Let us point out that, for an acyclic quiver, the identification of $M^{\theta\hypsst}(Q,\alpha)$ with a GIT quotient of a projective variety implies that $M^{\theta\hypsst}(Q,\alpha)$ is projective without using the theorem of Le Bruyn and Procesi \cite{LP:90}.
	\end{rem}
	
	\bibliographystyle{abbrv}
	\bibliography{Literature}

\def\cprime{$'$} \def\cprime{$'$} \def\cprime{$'$}
\begin{thebibliography}{10}

\bibitem{BHZ:01}
K.~Bongartz and B.~Huisgen-Zimmermann.
\newblock Varieties of uniserial representations. {IV}. {K}inship to geometric
  quotients.
\newblock {\em Trans. Amer. Math. Soc.}, 353(5):2091--2113, 2001.

\bibitem{CR:08}
P.~Caldero and M.~Reineke.
\newblock On the quiver {G}rassmannian in the acyclic case.
\newblock {\em J. Pure Appl. Algebra}, 212(11):2369--2380, 2008.

\bibitem{CEFR:21}
G.~Cerulli~Irelli, F.~Esposito, H.~Franzen, and M.~Reineke.
\newblock Cell decompositions and algebraicity of cohomology for quiver
  {G}rassmannians.
\newblock {\em Adv. Math.}, 379:107544, 2021.

\bibitem{CFR:12}
G.~Cerulli~Irelli, E.~Feigin, and M.~Reineke.
\newblock Quiver grassmannians and degenerate flag varieties.
\newblock {\em Algebra Number Theory}, 6(1):165--194, 2012.

\bibitem{DN:89}
J.-M. Drezet and M.~S. Narasimhan.
\newblock Groupe de {P}icard des vari\'et\'es de modules de fibr\'es
  semi-stables sur les courbes alg\'ebriques.
\newblock {\em Invent. Math.}, 97(1):53--94, 1989.

\bibitem{ER:09}
J.~Engel and M.~Reineke.
\newblock Smooth models of quiver moduli.
\newblock {\em Math. Z.}, 262(4):817--848, 2009.

\bibitem{GM:82}
I.~M. Gel{\cprime}fand and R.~D. MacPherson.
\newblock Geometry in {G}rassmannians and a generalization of the dilogarithm.
\newblock {\em Adv. in Math.}, 44(3):279--312, 1982.

\bibitem{Hille:15}
L.~Hille.
\newblock Moduli of {R}epresentations, {Q}uiver {G}rassmannians, and {H}ilbert
  {S}chemes.
\newblock Preprint. arXiv:1505.06008, 2015.

\bibitem{HK:13}
Y.~Hu and S.~Kim.
\newblock Quivers, invariants and quotient correspondence.
\newblock {\em J. Algebra}, 393:197--216, 2013.

\bibitem{Huisgen-Zimmermann:07}
B.~Huisgen-Zimmermann.
\newblock Classifying representations by way of {G}rassmannians.
\newblock {\em Trans. Amer. Math. Soc.}, 359(6):2687--2719, 2007.

\bibitem{Kapranov:93}
M.~M. Kapranov.
\newblock Chow quotients of {G}rassmannians. {I}.
\newblock In {\em I. {M}. {G}el{\cprime}fand {S}eminar}, volume~16 of {\em Adv.
  Soviet Math.}, pages 29--110. Amer. Math. Soc., Providence, RI, 1993.

\bibitem{King:94}
A.~D. King.
\newblock Moduli of representations of finite-dimensional algebras.
\newblock {\em Quart. J. Math. Oxford Ser. (2)}, 45(180):515--530, 1994.

\bibitem{LM:98}
V.~Lakshmibai and P.~Magyar.
\newblock Degeneracy schemes, quiver schemes, and {S}chubert varieties.
\newblock {\em Internat. Math. Res. Notices}, (12):627--640, 1998.

\bibitem{LP:90}
L.~Le~Bruyn and C.~Procesi.
\newblock Semisimple representations of quivers.
\newblock {\em Trans. Amer. Math. Soc.}, 317(2):585--598, 1990.

\bibitem{Luna:73}
D.~Luna.
\newblock Slices \'{e}tales.
\newblock In {\em Sur les groupes alg\'{e}briques}, pages 81--105. Bull. Soc.
  Math. France, Paris, M\'{e}moire 33. 1973.

\bibitem{GIT:94}
D.~Mumford, J.~Fogarty, and F.~Kirwan.
\newblock {\em Geometric invariant theory}, volume~34 of {\em Ergebnisse der
  Mathematik und ihrer Grenzgebiete (2)}.
\newblock Springer-Verlag, Berlin, third edition, 1994.

\bibitem{Nakajima:11}
H.~Nakajima.
\newblock Quiver varieties and cluster algebras.
\newblock {\em Kyoto J. Math.}, 51(1):71--126, 2011.

\bibitem{Qin:12}
F.~Qin.
\newblock Quantum cluster variables via {S}erre polynomials.
\newblock {\em J. Reine Angew. Math.}, 668:149--190, 2012.
\newblock With an appendix by Bernhard Keller.

\bibitem{Reineke:03}
M.~Reineke.
\newblock The {H}arder-{N}arasimhan system in quantum groups and cohomology of
  quiver moduli.
\newblock {\em Invent. Math.}, 152(2):349--368, 2003.

\bibitem{Reineke:05}
M.~Reineke.
\newblock Cohomology of noncommutative {H}ilbert schemes.
\newblock {\em Algebr. Represent. Theory}, 8(4):541--561, 2005.

\bibitem{Reineke:13}
M.~Reineke.
\newblock Every projective variety is a quiver {G}rassmannian.
\newblock {\em Algebr. Represent. Theory}, 16(5):1313--1314, 2013.

\bibitem{Ringel:18}
C.~M. Ringel.
\newblock Quiver {G}rassmannians for wild acyclic quivers.
\newblock {\em Proc. Amer. Math. Soc.}, 146(5):1873--1877, 2018.

\bibitem{Wolf:09}
S.~Wolf.
\newblock A geometric version of {BGP} reflection functors.
\newblock Preprint. arXiv:0908.4244, 2009.

\bibitem{Zelevinsky:85}
A.~V. Zelevinski\u{\i}.
\newblock Two remarks on graded nilpotent classes.
\newblock {\em Uspekhi Mat. Nauk}, 40(1(241)):199--200, 1985.

\end{thebibliography}
\end{document}